\newtheorem{theorem}{Theorem}
\theoremstyle{remark}
\newtheorem{remark}{Remark}
\newtheorem{definition}{Definition}
\newtheorem{lemma}{Lemma}
\numberwithin{equation}{section}
\newcommand{\norm}[1]{\Vert#1\Vert}
\newcommand{\abs}[1]{\vert#1\vert}
\newcommand{\Abs}[1]{\left\vert#1\right\vert}
\newcommand{\grad}{\nabla}
\newcommand{\bq}{\begin{equation}}
\newcommand{\eq}{\end{equation}}
\newcommand{\R}{\mathbb{R}}
\newcommand{\Rd}{\R^d}
\newcommand{\e}{\epsilon}
\newcommand{\bO}{\mathcal{O}}
\newcommand{\FM}{F^{\e}_M}
\newcommand{\FN}{F_N^\e}
\newcommand{\FA}{F_A^\e}
\newcommand{\Fe}{F^\e}
\newcommand{\MA}{{Monge-Amp\`ere}\xspace}
\newcommand{\USC}{\text{USC}}
\newcommand{\LSC}{\text{LSC}}
\newcommand{\ex}[1]{\times 10^{- #1}}
\newcommand{\G}{\mathcal{G}}
\newcommand{\Dt}{\mathcal{D}}
\newcommand{\xv}{\mathbf{x}}
\newcommand{\xo}{\mathbf{x}_0}
\newcommand{\www}{.25}
\begin{document}

\title[Filtered Schemes for Monge-Amp\`ere]
{Convergent  filtered schemes for the Monge-Amp\`ere partial differential equation
}
\author{Brittany D. Froese}
\thanks{Department of Mathematics, University of Texas at Austin,  1 University Station C1200, Austin, TX, 78712 ({\tt bfroese@math.utexas.edu})}

\author{Adam M. Oberman}
\thanks{Department of Mathematics and Statistics, McGill University, 805 Sherbrooke Street West, Montreal, Quebec, H3A 0G4, Canada ({\tt adam.oberman@mcgill.ca})}

\begin{abstract}
The theory of viscosity solutions has been effective for representing and approximating weak solutions to fully nonlinear Partial Differential Equations (PDEs) such as the elliptic \MA equation.  The approximation theory of Barles-Souganidis~\cite{BSnum} requires that numerical schemes be monotone (or elliptic in the sense of~\cite{ObermanDiffSchemes}).  But such schemes have limited accuracy.  
In this article, we establish a convergence result for filtered schemes, which are nearly monotone.
This allows us to construct finite difference discretizations of arbitrarily high-order.  We demonstrate that the higher accuracy is achieved when solutions are sufficiently smooth.
In addition, the filtered scheme provides a natural detection principle for singularities.  We employ this framework to construct a formally second-order scheme for the \MA equation and present computational results on smooth and singular solutions.
\end{abstract}

\date{\today}

\subjclass[2000]{35J15, 35J25, 35J60, 35J96 65N06, 65N12, 65N22
}

\keywords{
Fully Nonlinear Elliptic Partial Differential Equations, Monge Amp\`ere equations, Nonlinear Finite Difference Methods, Viscosity Solutions, Monotone Schemes, 
}

\maketitle

\tableofcontents

\section{Introduction}\label{sec:intro}

The numerical approximation of the \MA equation is a problem of current interest, {because of} the many applications of the equation to various fields, and because the equation is the prototypical fully nonlinear elliptic Partial Differential Equation (PDE).  Thus building effective (convergent, fast, accurate) solvers to this equation {demonstrates the possibility} of effectively solving a wide class of fully nonlinear PDEs, which until recently were believed to be intractable.

We consider the \MA equation in a  convex {bounded} subset~$ \Omega \subset \Rd$ 
\bq
\label{MA}\tag{MA}
\det(D^2u(x)) = f(x), \quad \text{for $x$ in }\Omega,
\eq
where  $\det(D^2u)$,  is the determinant of the Hessian of the function $u$.   
We include Dirichlet boundary conditions on the boundary~$\partial\Omega$,
\bq\tag{D}\label{Dirichlet}
u(x) = g(x), \quad \text{for $x$ on }\partial\Omega.
\eq
{This} equation is augmented by the convexity constraint
\bq\label{convex}\tag{C}
u \text{ is convex, }
\eq
which is necessary for the equation to be elliptic.

This article builds on a series of papers which have developed solution methods for the {Monge-Amp\`ere} equation.  The foundation of schemes for a wide class of nonlinear parabolic and elliptic equations was developed in~\cite{ObermanDiffSchemes}.  The first convergent scheme for the \MA equation was built in~\cite{ObermanEigenvalues}; this was restricted to two dimensions and to a slow iterative solver.   Implicit solution methods were first developed in~\cite{BenamouFroeseObermanMA}, where it was demonstrated that the use of non-monotone schemes led to slow solvers for singular solutions.   In~\cite{ObermanFroeseMATheory} a higher dimensional monotone discretization was constructed, a fast Newton solver was also implemented.  The convergent discretizations use a wide stencil scheme, which leads to a reduction on accuracy which reflects the directional resolution of the stencil.  While this cannot be avoided on singular solutions, it is desirable to have a more accurate solver on (rare) smooth solutions.
 In~\cite{ObermanFroeseFast}  a hybrid solver was built, which combined the advantages of accuracy in smooth regions, and robustness (convergence and stability) near singularities.   However, this was accomplished at the expense of a convergence proof.   While the Dirichlet problem is a natural starting point, for applications related to mapping problems or Optimal Transportation, other boundary conditions are used.  These boundary conditions were implemented in~\cite{FroeseTransport}.   In a work in progress, the filtered scheme has also been applied to the Optimal Transportation boundary conditions~\cite{BenamouFroeseObermanOT}.
  
\subsection{Contribution of this work}\label{sec:contribution}
In this article, we improve on previous results by building a convergent, higher order accurate scheme for the \MA equation, combining the advantages of the hybrid scheme in~\cite{ObermanFroeseFast} with the convergence proof in~\cite{ObermanFroeseMATheory}.   Our result requires that we extend the convergence theory of Barles and Souganidis by considering the more general class of nearly monotone schemes.  This extension is of independent interest, and applies to schemes for a wide class of elliptic equations.  Once this theoretical result is established, it leads to a natural and simple method for constructing accurate discretizations of {the Monge-Amp\`ere} equation, and indeed for the entire class of nonlinear elliptic PDEs, given the foundation of a monotone elliptic scheme.

The  combined schemes are called  \emph{filtered} finite difference {approximations}.  We provide a proof that solutions of the filtered scheme exist and converge to the viscosity solution of the underlying PDE.  
 The theory ensures, and computational results verify, that solutions of this scheme converge to the viscosity solution of the \MA equation even in the singular setting.  Using Newton's method, we construct a fast solver for the resulting nonlinear system of equations.

The advantage of the more {accurate} scheme is obvious when solutions are smooth. 
In the case of a singular solution, the added accuracy of the filtered scheme is redundant, at least near singular parts of the solution.  However, the since the low accuracy of the monotone scheme is not a problem on singular solutions, 
the filtered scheme allows for the use of a narrow stencil in general, while still achieving full accuracy on smooth solutions.

\subsection{The heuristics of a hybrid scheme}
A natural way to build hybrid schemes is to make a convex combination of a stable, convergent scheme, $F_M$, and an accurate, less stable scheme, $F_A$.  The weighting of the convex combination is determined by a function which measures the regularity of the solution so that
\[
F_H = w_s F_M + (1-w_s) F_A, 
\]
where $0\le w_s \le 1$ is a continuous function which is 1 in a neighbourhood of a singularity and goes to 0 elsewhere.  In general $w_s$ could be determined by checking the size of a derivative of $u$, for example $\norm{D^2u} \ge 1/h$, where $h$ is the grid spacing. 
We record the idea of a generic hybrid scheme with the following schematic
\bq\label{hybrid}
F_H = 
\begin{cases}
F_M & \text{ near singularity, e.g. $\norm{D^2u} \ge 1/h$}, \\
F_A & \text{ elsewhere }.
\end{cases} 
\eq

  So the function $w_s$ depends on the solution, and this could potentially lead to instabilities unrelated to the stability of the underlying schemes $F_M$ and $F_A$.  

In~\cite{ObermanFroeseFast}, $F_M$ corresponded to a wide stencil, monotone elliptic scheme, and $F_A$ corresponded to the standard nine point finite difference scheme.
{We review those schemes below}.  In that article, we were able to use the regularity theory for the Dirichlet problem for~\eqref{MA} to determine $w_s$ \emph{a priori}.   
However, in other problems of interest, for example with Optimal Transportation boundary conditions, $w_s$ will depend on the solution.
Since the resulting hybrid scheme was not monotone, we could not apply the Barles-Souganidis theorem and there was no other obvious way to prove convergence.
However the method worked well in practice.  

To summarize: hybrid schemes are practical tools which blend the accuracy and stability of the underlying schemes.  But they are defined in an \emph{ad hoc} manner, and there is no guarantee of the observed stability and accuracy.

\subsection{A motivating example for a filtered scheme}
In this section, we present a filtered scheme on a simpler equation,  to present the ideas which follow more clearly.

The filtered scheme provided an intrinsic  method for defining based on the size of the difference between the monotone operator and the accurate operator.  For illustration purposes, consider the model equation 
\[
F[u](x) = \abs{u_x} -1
\]
on the domain $[-1,1]$ with boundary conditions $u(-1) = u(1) =1$.  Then the viscosity solution is simply $u(x) = \abs{x}$.  Define the monotone upwind scheme
\[
F_M^h[u](x) = \max \left\{ 
\frac{u(x+h) - u(x)}{h},
\frac{u(x-h) - u(x)}{h}
\right\} -1,
\]
which is first order accurate: $F_M^h(\phi) - F(\phi) = \bO(h)$, for smooth $\phi$.
The scheme $F^M$ is consistent with the method of characteristics, and it satisfies a maximum principle, so solutions converge.  
The equation has the explicit form
\[
u(x) = \max \{ u(x+h),  u(x- h)\} - h.
\]
Next define the second order accurate, but unstable, centred difference scheme
\[
F_A^h = \frac{\abs{u(x+h) - u(x-h)}}{2h} -1,
\]
$F_A^h(\phi) - F(\phi) = \bO(h^2)$, for smooth $\phi$.

A natural definition of a singularity of the equation is when $\abs{u_{xx}}$ is large.  For the finite difference scheme, this can be interpreted as  
\[
\frac{\abs{u(x+h) -2u(x) + u(x-h) }}{h^2} \ge \frac 1 h.
\]
Taking the difference between the two schemes, we obtain a similar condition, up to the scaling in $h$,
\[
\abs{F_A^h - F_M^h} = \frac 1 2 \frac{\abs{u(x+h) -2u(x) + u(x-h) }}{h}.
\]
So a singularity can be defined by the condition
\bq\label{DefSing}
\Abs{\frac{F_A^h - F_M^h}{h}} \ge \frac 1 {h}.
\eq
The inequality above leads to a hard condition.  Instead we replace it {with} a soft condition using a continuous filter function. 
\begin{definition}[Filter function]\label{defn:filter}
We define a  filter function to {be} a continuous, bounded function, $S$, which is equal to the identity in a neighbourhood of the origin and zero outside.  
\end{definition}
\begin{figure}[hbt]
\includegraphics[trim=1.6in 1.9in 2in 2.5in, clip=true]
{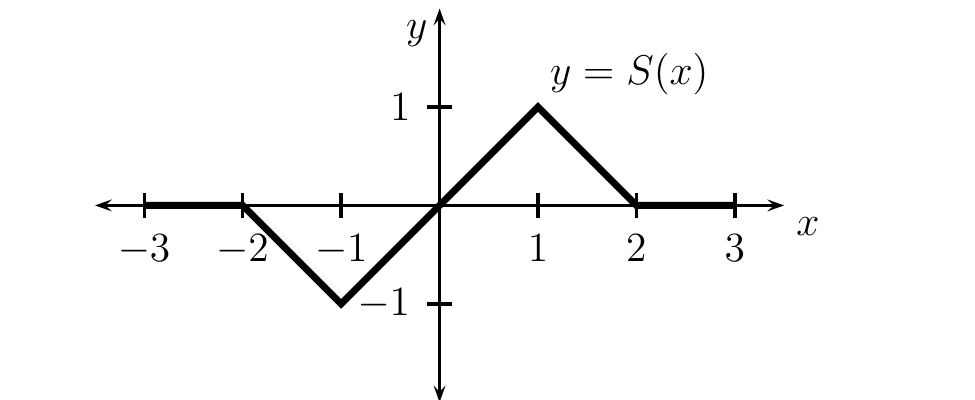}
\caption{
Filter function
}
\label{fig:filter}
\end{figure}
For example, 
\bq\label{eq:filter1}
S(x) = \begin{cases}
x & \abs{x} \leq 1 \\
0 & \abs{x} \ge 2\\
-x+ 2  & 1\le x \le 2 \\
-x-2  & -2\le x\le -1.
\end{cases} 
\eq
See \autoref{fig:filter}.

Then the filtered scheme can be defined as follows
\[
F_S = F_M + h S\left( \frac{F_A - F_M} {h} \right ).
\]
By virtue of the condition~\eqref{DefSing}, the filtered scheme will be consistent with the condition for a hybrid scheme~\eqref{hybrid}.  (In general, for~\eqref{DefSing} to hold, the scaling $h$ may depend on the order of accuracy of the schemes $F_A$ and $F_M$).

By the definition~\eqref{eq:filter1},
\[
\abs{F_S - F_M} \le 2h.
\]
{Thus} the difference between the filtered scheme, $F_S$, and the monotone scheme, $F_M$, goes to zero with $h$, uniformly over all functions.  This property is what allows us to prove convergence, since the filtered scheme is nearly monotone.

\subsection{Introduction to numerical methods for degenerate elliptic PDEs}\label{sec:introNumerics}
There are two major challenges in building numerical solvers for nonlinear and degenerate elliptic PDEs.  The first challenge is to build convergent approximations, often with finite difference schemes.  The second challenge is to build efficient solvers.

The approximation theory developed by Barles and Souganidis~\cite{BSnum}  provides criteria for the convergence of approximation schemes:  schemes that are consistent, monotone, and stable converge to the unique viscosity solution of a degenerate elliptic equation.  However, this work does not indicate how to build such schemes, or how to produce fast solvers for the schemes.  It is not obvious how to ensure that schemes satisfy the required comparison principle.  The class of schemes for which this property holds was identified in~\cite{ObermanDiffSchemes}, and were called \emph{degenerate elliptic}, by analogy with the structure condition for the PDE.     

An important distinction for this class of equations is between first order (Hamilton-Jacobi) equations and the second order (nonlinear elliptic) case.  The theory of viscosity solutions~\cite{CIL} covers both cases, but the corresponding numerical methods are quite different. In the first order case, where the method of characteristics is available, there are formulas for exact solutions (e.g. Hopf-Lax) and there is a connection with one-dimensional conservation laws~\cite{EvansBook}.  The second order case has more in common with divergence-structure elliptic equations; however, for degenerate of fully nonlinear equations, many of the tools from the divergence-structure case (e.g. finite elements, multi grid solvers) have not been successfully applied.

Much of the progress on discretization techniques and fast solvers is limited to the first order case.
For Hamilton-Jacobi equations, which are first order nonlinear PDEs, monotonicity is necessary for convergence. Early numerical papers studied explicit schemes for time-dependent equations on uniform grids~\cite{CrandallLionsNum, SougNum}.   These schemes have been extended to higher accuracy schemes,  which include second order convergent methods---the central schemes~\cite{LinTadmorHJ}, as well as higher order interpolation methods---the ENO schemes~\cite{OsherShuENO}.  Semi-Lagrangian schemes take advantage of the method of characteristics to prove convergence~\cite{FalconeSemiLagrangian}.  These have been extended to the case of differential games~\cite{FalconeBardiGames}. 
Two classes of fast solvers have been developed: fast marching~\cite{FastMarching, TsitsiklisFastMarching} and  fast sweeping~\cite{FastSweeping}. The fast marching and fast sweeping methods give fast solution methods for first order equations by taking advantage of the method of characteristics, which is not available in the second order case.  

There is much less work available for second order degenerate elliptic equations.  
For uniformly elliptic PDEs, monotone schemes are not always necessary for convergence (for example, most higher order finite element methods are not monotone).  
However, for fully nonlinear or degenerate elliptic equations, the only convergence proof currently available requires that schemes be monotone.   One way to ensure monotonicity is to use wide stencil finite difference schemes; this has been done for the equation for motion by mean curvature~\cite{ObermanMC}, for the Infinity Laplace equation~\cite{ObermanIL}, for functions of the eigenvalues~\cite{ObermanEigenvalues}, for Hamilton-Jacobi-Bellman equations~\cite{ZidaniWideHJBLong}, and for the convex envelope~\cite{ObermanCENumerics}.  
Even for linear elliptic equations, a wide stencil may be necessary to build a monotone scheme~\cite{MotzkinWasow}.


\subsection{Higher order numerical methods}\label{sec:introHO}
Monotone schemes have limited accuracy. For first-order equations,  a consistent, monotone scheme will be at most first-order accurate.  For second-order equations, the accuracy is at most second-order~\cite{ObermanDiffSchemes}.  Moreover, as we have already noted, wide stencils are needed to build monotone schemes for some equations.  In this case, the formal accuracy of the scheme depends not only on the spatial resolution of the scheme, but also on the angular resolution of the scheme.  To make large computations practical, it is desirable to restrict schemes to a reasonably narrow stencil width.  However, this can place severe restrictions on the accuracy that can be achieved.

For first-order Hamilton-Jacobi equations, some progress has been made in the construction of convergent higher-order schemes. Lions and Souganidis~\cite{LionsSougMUSCL} considered using a general higher-order scheme, which must be ``filtered'' to ensure the preservation of fractional one-sided second derivative bounds. These bounds, which are used instead of a monotonicity condition, coincide with the condition needed to ensure uniqueness of almost everywhere solutions of Hamilton-Jacobi equations; this approach does not generalize to second-order equations.  Using an approach more similar to the one presented in this article, Abgrall~\cite{Abgrall} proposed a ``blending'' of a monotone and a higher-order scheme.  Provided a solution to the blended scheme exists, it converges to the viscosity solution of the steady Hamilton-Jacobi equation.

In the case of second-order fully nonlinear elliptic equations, we are not aware of any convergent higher-order schemes.

\subsection{Numerical methods for the \MA equation}\label{sec:introMA}

For singular solutions, Newton's method combined with a non-monotone discretization of \MA can become unstable~\cite{ObermanFroeseFast}, which necessitates the use of other solvers for the discrete system of nonlinear equations.  
The simplest solver is to simply iterate the discrete version of the parabolic equation
\[u_t = \det(D^2u) - f\]
using forward Euler.   This method is slow, since the convergence time depends on the nonlinear CFL condition, which become more stringent with increasing problem size~\cite{BenamouFroeseObermanMA}.
Semi-implicit solvers can improve the solution time for smooth solutions, but can slow down or break down on singular solutions~\cite{BenamouFroeseObermanMA}.   Similar behavior was observed using other discretizations~
\cite{DGnum2006}.

 However, in addition to suffering from a severe time-step restriction, this approach does not enforce the convexity constraint.  In particular, this type of iteration is unstable for solutions that are non-strictly convex.  In two dimensions, we constructed solution methods that selected the convex solution.  While no convergence proof was available, the methods appeared to converge to the correct weak solution of the equation.  However, convergence was very slow for non-smooth or non-strictly convex solutions.

In order to ensure convergence, more sophisticated techniques are needed.  An early work by Oliker and Prussner~\cite{olikerprussner88} described a method that converges to the Aleksandrov solution in two dimensions.  This early method was used to solve a problem with only about a dozen grid points.  
Several other methods have been proposed in recent years~\cite{BrennerNeilanMA2D,DGnum2006,FengFully,LoeperMA}; these are similar to our standard finite difference methods in terms of the lack of convergence theory and behaviour on singular solutions.  We draw particular attention to the vanishing moment method proposed by Feng and Neilan~\cite{FengFully}, which involves regularizing the equation by adding a small multiple of the bilaplacian.  This approach will be discussed in more detail in~\S\ref{sec:bilaplace}.

\subsection{The finite difference discretization of \MA}\label{sec:MAmon}
We describe the  elliptic (monotone) representation of the \MA operator used in \cite{ObermanFroeseMATheory,FroeseTransport}.  

We begin with some informal remarks to explain the representation of the operator.  It bears repeating that the standard finite difference discretization is not elliptic or convex. 

One difficulty comes from the off diagonal terms $u_{xy}$.  So by using wide stencils and rotating the coordinate system we can hope find the coordinate system in which the Hessian matrix is diagonal.  But finding this coordinate system must be done in a monotone manner.   The way to do this is to use a version of Hadamard's inequality, which we interpret to give an expression for the determinant of a positive definite matrix as a minimization.   

Hadamard's inequality states that for a  positive semidefinite $d\times d$ matrix $M$,
\[
\det(M) \le \prod_{i=1}^d m_{ii}
\]
We can force equality by choosing a coordinate system where $M$ is diagonal.  We write this as
\[
\det(M) = \min_{N \in O} \prod_{i=1}^d  n_i^T M n_i, \qquad  \text{for $M$ positive definite}
\]
where $O$ is the set of all orthogonal $d\times d$ matrices, and $n_i, i = 1,\dots d$ are the rows of $N$.

Another difficulty comes from the fact that the operator is elliptic only when $M$ is positive definite.  So we would like a continuous extension of the operator to non-positive definite matrices.   A first possibility is
\[
\overline
\det'(M) = \min_{N \in O} \prod_{i=1}^d  \max \left\{ n_i^T M n_i, 0 \right\}\
\]
which extends the formula.  However, we would also like to apply Newton's method, which computes (and inverts) the gradient of the operator.  But this operator has a zero gradient.   So instead, we use the following extension, which gives a negative value with a non-zero gradient on (most) non-positive definite matrices.
\[
\overline\det(M) = \min_{N \in O} \left \{  \prod_{i=1}^d  \max \left\{n_i^T M n_i, 0 \right\} +  \sum_{i=1}^d  \min \left\{n_i^T M n_i, 0 \right\}  \right \}
\]
where 
\[
\overline\det(M) = 
\begin{cases}
\det(M), & \text{$M$ positive semi-definite }\\
\lambda_1 + \dots + \lambda_k, & \lambda_1 \le \dots \le \lambda_k < 0 \text{ are the negative eigenvalues of $M$ }
\end{cases}
\]
We now apply the representation to the matrix $D^2u(x)$.
The term $n_i^T D^2u n_i$ has a natural interpretation as a directional second derivatives, in the direction 
$n_i$.  This allows us to write
\[  
 \overline\det(D^2u) \equiv \min\limits_{N \in O}  \left \{  \max \left\{ \frac{ d^2 u}{d n_i^2}, 0 \right\} +  \sum_{i=1}^d  \min \left\{ \frac{ d^2 u}{d n_i^2}, 0 \right\}  \right \}
\]
where $O$ is the set of all orthogonal $d\times d$ matrices, and $n_i, i = 1,\dots d$ are the rows of $N$. 

The next stage is to replace the derivatives with finite differences, to arrive at a semi-discrete representation. 
However, we have only a limited number of directions available on the grid.
So we limit ourselves to considering a finite number of vectors $n_i$ that lie on the grid and have a fixed maximum length; this is the directional discretization, which gives us the angular resolution $d\theta$ of our stencil; see \autoref{fig:stencil}.  In this figure, values on the boundary are used to maintain the directional resolution $d\theta$ (at the expense of lower order accuracy in space because the distances from the reference point are not equal).  Another option is to use narrower stencils as the boundary is approached, which leads to lower angular resolution, but better spatial resolution.  We denote the resulting set of orthogonal vectors by $\G$.

\begin{figure}
  \centering
  \subfloat{
  \includegraphics[height=0.4\textwidth]
  {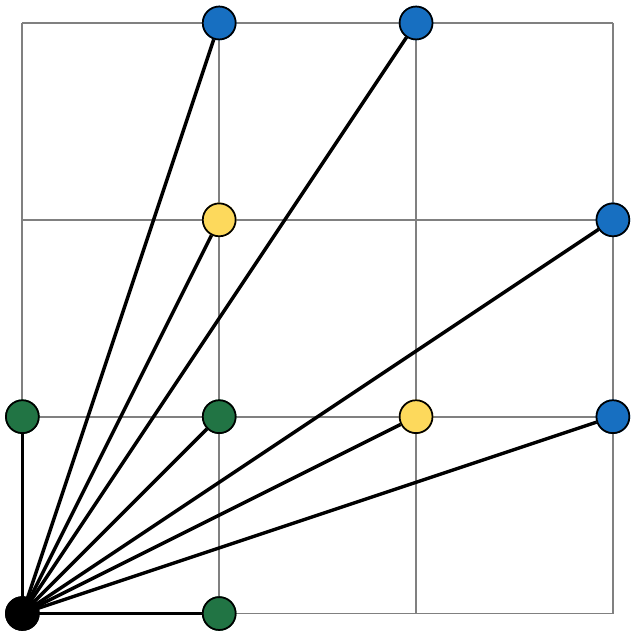}
  }                
    \subfloat{
  \includegraphics[height=0.4\textwidth]
  {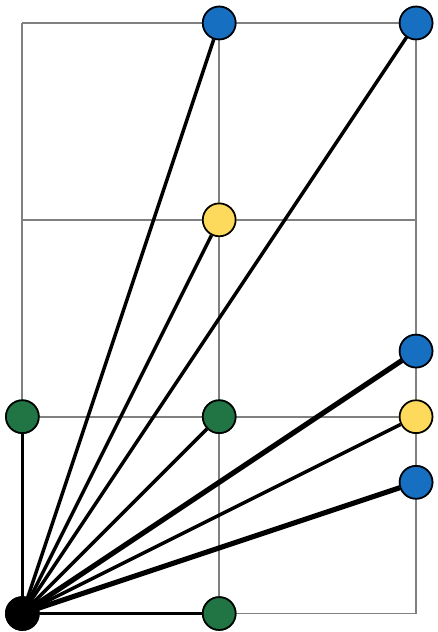}
  }    
  \subfloat{
  \includegraphics[height=0.4\textwidth]
  {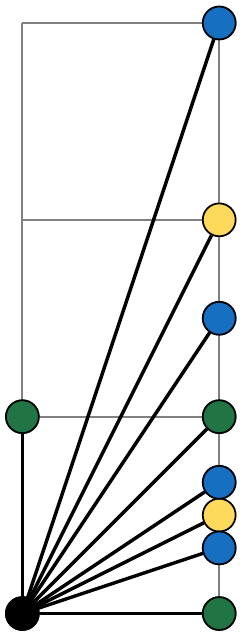}
  }
\caption{
Neighboring grid points used for width one  (green), two (yellow), and three (blue) stencils.  The illustration shows the neighbors in the first quadrant.  The modification near the boundary is illustrated in the second and third figures.
}
\label{fig:stencil}
\end{figure}

Each of the directional derivatives in the \MA operator is then discretized using centered differences:
\[ \Dt_{\nu\nu}u_i = \frac{1}{\abs{\nu}^2h^2}\left(u(x_i + \nu h) + u(x_i - \nu h) - 2u(x_i)\right). \]
Thus the discrete version of the \MA operator is
\[ 
\min\limits_{\{\nu_1\ldots\nu_d\}\in \G} \left\{
\prod\limits_{j=1}^{d} 
\max\{\Dt_{\nu_j\nu_j}u_i,\delta\} + \sum\limits_{j=1}^d\min\{\Dt_{\nu_j\nu_j}u_i,\delta\}\right\} 
 \]
where $\delta>0$ is a small parameter used to bound the second directional derivatives away from zero.

\section{Viscosity Solutions of Degenerate Elliptic Equations}\label{sec:viscosity}

\subsection{Background from viscosity solutions}
In this section we review the theory of viscosity solutions. See the references above and~\cite{CIL}.

Given an open domain $\Omega \subset \Rd$, a second order partial differential equation on $\Omega$ is a function
\[
F: \Omega \times \R \times \Rd \times \R^{d\times d} \to \R
\]
which we write as $F(x,r,p,M)$.  Given a function $u\in C^2(\Omega)$ we let $p = \grad u$ and $M = D^2u$ denote the gradient and Hessian of $u$, respectively.  
We include Dirichlet boundary conditions into the operator $F$ in order to pose equation~\eqref{PDE} in the closed domain $\bar{\Omega}$ by defining
\[
F[u](x) = u(x) - g(x), \quad x\in \partial \Omega.
\]
Then the function $u\in C^2(\Omega)$ is a solution of the PDE $F$ in $\Omega$ if 
\bq\label{PDE}\tag{PDE}
F[u](x) = F(x,u(x),\grad u(x), D^2u(x)) = 0, \quad x\in {\Omega}.
\eq
However, most of the PDEs we consider fail to have classical solutions under general assumptions on the data.  This motivates the definition of \emph{viscosity} solutions.
\begin{definition}
The equation~\eqref{PDE} is \emph{degenerate elliptic} if 
\[ F(x,r,p,X) \leq F(x,s,p,Y) \]
for all $x\in\bar{\Omega}$, $r,s\in\R$, $p\in\R^n$, $X,Y\in S^n$ with $X \geq Y$ and $r \leq s$.
Here $X \geq Y$ means that $X-Y$ is a positive definite matrix.
\end{definition}
The definition is related to the \emph{comparison principle}.
\begin{lemma}
The function $F(x,r,p,X)$ is degenerate elliptic if and only if whenever $x$ is a non-negative local maximum of $u-v$, for $u,v \in C^2$, $F[u](x) \geq F[v](x)$.
\end{lemma}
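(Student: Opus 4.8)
The plan is to prove the two implications separately, in each case using the elementary calculus that links a (non-negative) local maximum of $u-v$ to the first- and second-order data $(\grad u, D^2u)$ and $(\grad v, D^2v)$ at that point, and then matching this data against the algebraic monotonicity in the definition. The whole argument is essentially a bookkeeping exercise once the correct identifications are made.

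First I would prove that degenerate ellipticity implies the comparison property. Suppose $x$ is a non-negative local maximum of $u-v$ with $u,v\in C^2$. The first-order optimality condition gives $\grad u(x)=\grad v(x)=:p$; the second-order condition gives $D^2u(x)\le D^2v(x)$; and ``non-negative'' gives $u(x)\ge v(x)$. Setting $X=D^2v(x)$, $Y=D^2u(x)$, $r=v(x)$, $s=u(x)$, these facts read exactly $X\ge Y$ and $r\le s$. Applying the definition verbatim yields $F(x,r,p,X)\le F(x,s,p,Y)$, which is precisely $F[v](x)\le F[u](x)$, i.e.\ $F[u](x)\ge F[v](x)$.

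For the converse I would realize arbitrary admissible data by explicit quadratic test functions. Given $x$, scalars $r\le s$, a vector $p$, and matrices $X\ge Y$, set
\[
\hat u(y)=s+p\cdot(y-x)+\tfrac12(y-x)^T Y(y-x), \qquad
\hat v(y)=r+p\cdot(y-x)+\tfrac12(y-x)^T X(y-x).
\]
Then $(\hat u-\hat v)(y)=(s-r)+\tfrac12(y-x)^T(Y-X)(y-x)$, and since $s-r\ge 0$ and $Y-X\le 0$, the point $x$ is a global (hence local) non-negative maximum of $\hat u-\hat v$. As $\grad\hat u(x)=\grad\hat v(x)=p$, $D^2\hat u\equiv Y$, $D^2\hat v\equiv X$, $\hat u(x)=s$, $\hat v(x)=r$, the comparison hypothesis gives $F[\hat u](x)\ge F[\hat v](x)$, that is $F(x,s,p,Y)\ge F(x,r,p,X)$, which is the definition of degenerate ellipticity.

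The two points requiring care are getting the inequality directions to line up (note that the \emph{larger} Hessian and \emph{smaller} scalar sit in the $\le$-slot of the definition, and the local max pairs $v$ with the larger Hessian) and the genuinely degenerate case in which $X-Y$ is only positive semidefinite. The forward direction only ever produces $D^2u(x)\le D^2v(x)$ in the semidefinite sense, so if one reads ``$X\ge Y$'' as strict positive definiteness the definition does not apply verbatim; the remedy is to replace $D^2v(x)$ by $D^2v(x)+\e I$, invoke the strict inequality, and let $\e\to0$ using continuity of $F$. Under the standard semidefinite reading of $\ge$ this perturbation is unnecessary and both implications are immediate.
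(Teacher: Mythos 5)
Your proposal is correct, and in the forward direction it is essentially the paper's argument: at a non-negative local maximum of $u-v$ you read off $u(x)\ge v(x)$, $\grad u(x)=\grad v(x)$, $D^2u(x)\le D^2v(x)$, and chain the monotonicity of $F$ in the scalar and matrix slots (the paper does this as $F(x,u,Du,D^2u)=F(x,u,Dv,D^2u)\ge F(x,v,Dv,D^2u)\ge F(x,v,Dv,D^2v)$). Where you go beyond the paper is the converse: although the lemma is stated as an ``if and only if,'' the paper's proof establishes only the forward implication and leaves the reverse direction unaddressed. Your quadratic test-function construction --- realizing arbitrary admissible data $(x,r\le s,p,X\ge Y)$ by the paraboloids $\hat u,\hat v$ so that $x$ is a non-negative global maximum of $\hat u-\hat v$ with exactly the prescribed value, gradient, and Hessian data --- is the natural way to close that gap, and it is clean and correct. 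You are also right to flag the definite-versus-semidefinite issue: the paper's definition literally declares ``$X\ge Y$ means $X-Y$ is positive definite,'' under which the forward direction does not apply verbatim (a local maximum only yields $D^2v(x)-D^2u(x)\ge 0$ in the semidefinite sense); your $\e I$-perturbation remedy is the standard fix, with the caveat you should state explicitly that it uses continuity of $F$ in the matrix argument, which the paper assumes tacitly (as is customary in viscosity-solution theory) but never writes down. In short: same mechanism as the paper where the paper has a proof, plus a correct completion of the half the paper omits.
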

\begin{proof}If $x$ is a local maximum,  
$u \geq v$, $Dv = Du$, and $D^2u \leq D^2v,$  at  $x$.  Then $F(x,u,Du,D^2u) = F(x,u,{Dv}, D^2u) \geq F(x,{v},Dv, D^2u)
\geq F(x,v,Dv,{D^2v})$.
Here we have used the definition degenerate elliptic.
\end{proof}

Elliptic equations need not have smooth solutions, which necessitates some notion of a weak solution.  We are interested in the viscosity solution~\cite{CIL}.  Before we define this, we introduce the upper and lower semi-continuous envelopes of a function.

\begin{definition}[Upper and Lower Semi-Continuous Envelopes]\label{def:envelope}
The \emph{upper and lower semi-continuous envelopes} of a function $u(x)$ are defined, respectively, by
\[ u^*(x) = \limsup_{y\to x}u(y), \]
\[ u_*(x) = \liminf_{y\to x}u(y). \]
\end{definition}

\begin{definition}[Viscosity Solution]\label{def:viscosity}
An upper (lower) semi-continuous function $u$ is a \emph{viscosity sub(super)-solution} of~\eqref{PDE} if for every $\phi\in C^2(\bar{\Omega})$, whenever $u-\phi$ has a local maximum (minimum) at $x \in \bar{\Omega}$, then
\[ 
F_*(x,u(x),\nabla \phi(x),D^2\phi(x)) \leq   0 
\]
\[ (F^*(x,u(x),\nabla \phi(x),D^2\phi(x)) \geq   0). \]
A function $u$ is a \emph{viscosity solution} if it is both a sub- and a super-solution.  See \autoref{fig:viscositydefn}.
\end{definition}

\begin{figure}[hbt]
\includegraphics[height = .15\textheight]{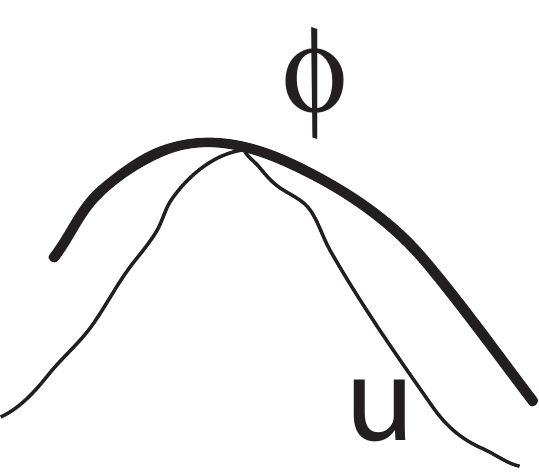}
\caption{Touching $u(x)$ by a smooth test function $\phi$.}
\label{fig:viscositydefn}
\end{figure}

\begin{remark}
When checking the definition of a viscosity solution, we can limit ourselves to considering unique, strict, global maxima (minima) of $u-\phi$ with a value of zero at the extremum.  See, for example,~\cite[Prop. 2.2]{KoikeViscosity}.  For an accessible introduction to viscosity solution in the first order case, see~\cite[Chapter 10]{EvansBook}.
\end{remark}

The equations we consider satisfy a \emph{comparison principle}.  If $u\in\USC(\bar{\Omega})$ is a sub-solution and $v\in\LSC(\bar{\Omega})$ is a super-solution of~\eqref{PDE}, then $u\leq v$ on $\bar{\Omega}$.  The proof of this result is one of the main technical arguments in the viscosity solutions theory~\cite{CIL}.  For a general proof and also simplified proofs in special cases, we refer to~\cite[Chapter 3]{KoikeViscosity}.

\subsection{Approximation schemes}\label{sec:approxSchemes} 

An approximation scheme is family of functions parameterized by $\e \in \R^+$, 
\[
\Fe:\Omega \times \R \times L^\infty(\Omega) \to \R
\]
which we write as $\Fe(x,r,u(\cdot))$.   Given a function $v\in L^\infty(\Omega)$, we write 
\bq\label{eq:approx} 
\Fe[v](x) = \Fe(x,v(x),v(\cdot)) 
\eq
where $\e>0$ is small.   The function $u^\e$ is a solution of the scheme $\Fe$ if
\bq\label{PDEe}  \tag*{$(\textnormal{PDE})^\e$}
\Fe[u^\e](x) = 0, \quad \text{ for all $x$ in } \Omega
\eq

\begin{remark}
For finite difference schemes, which are defined on a grid, piecewise linear interpolation is monotone, so we can extend the function continuously onto the domain.  We assume in the current setting that we are working with the extended function.
\end{remark}

The important properties of a scheme follow.

\begin{definition}[Consistent]\label{def:consistent}
The scheme~\eqref{eq:approx} is \emph{consistent} with the equation~\eqref{PDE} if for any smooth function $\phi$ and $x\in\bar{\Omega}$,
\[ \limsup_{\e\to0,y\to x,\xi\to0} \Fe(y,\phi(y)+\xi,\phi(\cdot)+\xi) \leq F^*(x,\phi(x),\nabla\phi(x),D^2\phi(x)), 
\]
\[ \liminf_{\e\to0,y\to x,\xi\to0} \Fe(y,\phi(y)+\xi,\phi(\cdot)+\xi) \geq F_*(x,\phi(x),\nabla\phi(x),D^2\phi(x)). \]
\end{definition}

Accuracy is defined inside the domain.  To be more precise, we could also define accuracy at the boundary.
\begin{definition}[Accurate]\label{def:accurate}
The scheme~\eqref{eq:approx} is \emph{accurate} to $\bO(\e^\alpha)$
 if for any smooth function $\phi$ and $x\in {\Omega}$,
\[
\Fe[\phi] - F[\phi] = \bO(\e^\alpha).
\]
\end{definition}

\begin{definition}[Stable]\label{def:stable}
The scheme~\eqref{eq:approx} is \emph{stable} if any solution $u^\e$ of~\eqref{eq:approx} is bounded independently of $\e$.
\end{definition}

\begin{definition}[Monotone]\label{def:Mon}
The scheme~\eqref{eq:approx} is \emph{monotone} if for every $\e>0$, $x\in\bar{\Omega}$, $s\in\R$ and bounded $u,v $,
\[ 
u \geq v  \implies
\Fe(x,s,u(\cdot)) \leq \Fe(x,s,v(\cdot)). 
 \]
\end{definition}

\begin{definition}[Elliptic]\label{def:Ell}
The scheme~\eqref{eq:approx} is \emph{elliptic} if it can be written
\[
\Fe[v] = \Fe(x, v(x), v(x) - v(\cdot)), 
\]
where $\Fe$ is nondecreasing in its second and third arguments,
\bq\label{felliptic}
s \le t,~ u(\cdot) \le v(\cdot) \implies  \Fe(x,s,u(\cdot)) \le  \Fe(x,t,v(\cdot))
\eq
\end{definition}

Elliptic schemes are monotone, since
\begin{align*}
u(\cdot) \ge v(\cdot) 
& \implies s - u(\cdot) \le s - v(\cdot)
\\
&  \implies  \Fe(x, s, s - u(\cdot)) \le \Fe(x, s, s - v(\cdot)).
\end{align*}

In addition, it is shown in~\cite{ObermanDiffSchemes} that under mild technical conditions  solutions of elliptic schemes exist, satisfy a comparison principle, and are stable.
These technical conditions are easily satisfied by finite difference schemes.
For example if we consider the Dirichlet problem, a small perturbation of the scheme (which can go to zero with $\e$) can be introduced to ensure that these conditions are satisfied.  A more complete theory for well-posedness of elliptic schemes can be found in \cite{ObermanGames}, which does not require perturbations.

\begin{definition}
The scheme $\FN$ is a \emph{perturbation} if there is a nonnegative modulus function $m:\R^+\to \R^+$ with 
\[
\lim_{\e\to 0^+} m(\e) = 0
\]
 such that
\[
\sup_{u \in L^\infty(\Omega)} \sup_{x\in \Omega} \Abs{\FN[u](x)} \le m(\e).
\]
\end{definition}

\begin{definition}[Nearly Monotone]\label{def:nearMon}
The scheme~$\Fe$ is \emph{nearly monotone} if it can be written as
\bq\label{nearMon} 
\Fe[u] =  \FM[u] + \FN[u] 
\eq
where $\FM$ is monotone and $\FN$ is a perturbation.
\end{definition}

%
%

\subsection{Convergence proof}
Theorem~\ref{thm:converge}, generalizes the corresponding result for monotone schemes in~\cite{BSnum}.  
It uses Lemma~\ref{lem:sequences} below, which is a standard result from viscosity solutions theory. We include the proof 
 since a reference to the result is not easily found.

%
%

\begin{theorem}[Convergence of Approximation Schemes]\label{thm:converge}
Let $u$ be the unique solution of the degenerate elliptic~\eqref{PDE}.  
For each $\e>0$,  let $u^\e$ be a stable solution of the nearly elliptic approximation scheme 
~\ref{PDEe}.  Then 
\[
u^\e \to u, \quad \text{ locally uniformly,  as } \e \to 0
\]\end{theorem}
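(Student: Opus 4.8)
The plan is to follow the Barles--Souganidis method of half-relaxed limits, isolating the one place where monotonicity is invoked and checking that near-monotonicity is enough there. First I would form the upper and lower relaxed limits
\[
\bar u(x) = \limsup_{\e\to 0,\, y\to x} u^\e(y), \qquad \underline u(x) = \liminf_{\e\to 0,\, y\to x} u^\e(y),
\]
which are finite because the family $\{u^\e\}$ is stable (Definition~\ref{def:stable}). By construction $\underline u \le \bar u$, with $\bar u$ upper semicontinuous and $\underline u$ lower semicontinuous. The whole proof then reduces to two symmetric claims: that $\bar u$ is a viscosity subsolution and $\underline u$ a viscosity supersolution of~\eqref{PDE}. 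Granting these, the comparison principle forces $\bar u \le \underline u$, which together with $\underline u \le \bar u$ gives $\bar u = \underline u =: u$; this common function is continuous and both a sub- and a supersolution, hence the unique viscosity solution, and that the relaxed limits then coincide with this continuous function yields local uniform convergence by the standard relaxed-limit argument.

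The substance is in the subsolution step, and this is where the perturbation $\FN$ is absorbed. I would fix $\phi \in C^2$ for which $\bar u - \phi$ has a strict local maximum at $x_0$, normalize so that $\phi(x_0) = \bar u(x_0)$, and use the extraction lemma (Lemma~\ref{lem:sequences}) to produce $\e \to 0$ and local maximizers $x_\e \to x_0$ of $u^\e - \phi$ with $u^\e(x_\e) \to \bar u(x_0)$. Setting $\xi_\e = u^\e(x_\e) - \phi(x_\e) \to 0$, the maximum property reads $u^\e(\cdot) \le \phi(\cdot) + \xi_\e$ with equality at $x_\e$. Applying the monotonicity of $\FM$ (Definition~\ref{def:Mon}) to this inequality and using $\Fe = \FM + \FN$ at $x_\e$ yields the central estimate
\[
0 = \Fe[u^\e](x_\e) = \FM[u^\e](x_\e) + \FN[u^\e](x_\e) \ge \FM\bigl(x_\e, \phi(x_\e)+\xi_\e, \phi(\cdot)+\xi_\e\bigr) - m(\e).
\]
The decisive feature of the nearly monotone structure (Definition~\ref{def:nearMon}) appears here: the perturbation bound $\abs{\FN[u^\e](x_\e)} \le m(\e)$ holds uniformly over \emph{all} bounded functions, so it applies to the possibly irregular $u^\e$ at its maximizer $x_\e$ with no control on derivatives whatsoever. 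Rearranging gives $\FM(x_\e,\phi(x_\e)+\xi_\e,\phi(\cdot)+\xi_\e) \le m(\e)$, and passing to the limit with consistency (Definition~\ref{def:consistent}, whose $\liminf$ inequality transfers verbatim from $\Fe$ to $\FM$ since $\FN \to 0$ uniformly) produces
\[
F_*\bigl(x_0,\bar u(x_0),\nabla\phi(x_0),D^2\phi(x_0)\bigr) \le \liminf_{\e\to 0} \FM\bigl(x_\e,\phi(x_\e)+\xi_\e,\phi(\cdot)+\xi_\e\bigr) \le \lim_{\e\to 0} m(\e) = 0,
\]
which is precisely the subsolution inequality at $x_0$.

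The supersolution property of $\underline u$ is then obtained by the mirror-image argument: test functions touching from below, minimizers $x_\e$, the reverse monotonicity inequality, the bound $\FN[u^\e] \ge -m(\e)$, and the $\limsup$ half of consistency together give $F^* \ge 0$. With both one-sided properties in hand, the remaining steps --- comparison to get $\bar u \le \underline u$, and the passage from coinciding relaxed limits to local uniform convergence --- are routine.

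I expect the only real obstacle to be the bookkeeping in the subsolution step, namely making transparent that (i) the perturbation is controlled by $m(\e)$ with no regularity assumption on $u^\e$ at $x_\e$, and (ii) the two consistency inequalities survive the splitting $\Fe = \FM + \FN$. Once these two points are made precise, the argument is a faithful adaptation of the classical monotone-scheme proof, with the perturbation contributing only a vanishing additive term at every stage.
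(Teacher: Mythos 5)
Your proposal is correct and follows essentially the same route as the paper's proof: half-relaxed limits, Lemma~\ref{lem:sequences} to extract maximizing sequences, absorption of the perturbation via $\abs{\FN}\le m(\e)$, passage to the limit by consistency, and comparison to conclude $\bar u=\underline u$ with local uniform convergence. The only cosmetic difference is that you split $\Fe=\FM+\FN$ and apply monotonicity to $\FM$ directly (paying $m(\e)$ once and transferring the consistency inequalities from $\Fe$ to $\FM$), whereas the paper works with the full scheme $\Fe$ throughout, using the near-monotonicity inequality~\eqref{nearlymon} with constant $2m(\e)$ and consistency of $\Fe$ itself.
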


\begin{proof}[Proof of Theorem~\ref{thm:converge}]
Define
\[ \bar{u}(x) = \limsup_{\e\to0,y\to x} u^\e(y) \in USC(\bar{\Omega}), \]
\[ \underline{u}(x) = \liminf_{\e\to0,y\to x} u^\e(y) \in LSC(\bar{\Omega}). \]
These functions are bounded by the stability of solutions (Definition~\ref{def:stable}). 
We record the fact that 
\bq \label{ineq1}
\underline{u} \leq \bar{u}. 
\eq
If we know that $\bar{u}$ is a subsolution and $\underline{u}$ is a supersolution, 
then we could apply the comparison principle for~\eqref{PDE} to $\bar{u}$ and $\underline{u}$ to conclude that
\[ 
\bar{u} \leq \underline{u}. 
\]
Together these inequalities imply that $\bar{u} = \underline{u}$.  The local uniform convergence follows from the definitions.

It remains to show that $\bar{u}$ is a subsolution and $\underline{u}$ is a supersolution.  

We proceed to show that $\bar{u}$ is a sub-solution by testing the condition in definition~\ref{def:viscosity}.  Given a smooth test function $\phi$, let $x_0$ be a strict global maximum of $\bar{u}-\phi$ with $\phi(x_0) = \bar{u}(x_0)$.   

 By Lemma~\ref{lem:sequences} (below), we can find sequences with 
\[
\begin{cases}
\e_n \to 0\\
y_n \to x_0\\
u^{\e_n}(y_n) \to \bar{u}(x_0).
\end{cases}
\]
where $y_n$ is a global maximizer of $u^{\e_n}-\phi$.

Define
\bq\label{xi}
\xi_n = u^{\e_n}(y_n)-\phi(y_n).
\eq
Then
\[
\xi_n \to \bar{u}(x_0) - \phi(x_0) = 0 
\]
and
$u^{\e_n}(x) - \phi(x) \leq u^{\e_n}(y_n) - \phi(y_n) = \xi_n$  for any  $x\in\bar{\Omega}.$ 
In particular
\bq\label{ulessv}
u^{\e_n}(\cdot) - \phi(\cdot) \leq \xi_n.
\eq

As a consequence of Definitions~\ref{def:Mon} and \ref{def:nearMon} we see that
\bq\label{nearlymon}
u(\cdot)  \le v(\cdot) 
 \implies 
\Fe(x,s,u(\cdot)) \ge \Fe(x,s,v(\cdot)) - 2 m(\e),
\eq
for every $\e>0$, $x\in\bar{\Omega}$, $s\in\R$.


Using the definitions above and \eqref{nearlymon}, we find that
\begin{align*}
0 &= F^{\e_n}[u^{\e_n}](y_n) = F^{\e_n}(y_n,u^{\e_n}(y_n),u^{\e_n}(\cdot)), 
&& \text{ since $u^{\e_n}$ is a solution},
\\
  &= F^{\e_n}(y_n, \phi(y_n) + \xi_n, \phi(\cdot) + (u^{\e_n}(\cdot)-\phi(\cdot)))
  && \text{ by \eqref{xi}},
  \\
  &\geq F^{\e_n}(y_n,\phi(y_n)+\xi_n,\phi(\cdot)+\xi_n) - 2m(\e)
  && \text{ by \eqref{ulessv} applied to  \eqref{nearlymon}}.
\end{align*}
Next, we compute
\begin{align*}
0 &\geq \liminf_{n\to\infty}\left\{F^{\e_n}(y_n,\phi(y_n)+\xi_n,\phi(\cdot)+\xi_n)- 2m(\e) \right\}\\
  &\geq \liminf_{\e\to0,y\to x,\xi\to0}F^{\e}(y,\phi(y)+\xi,\phi(\cdot)+\xi)\\
  &= F_*(x_0, \phi(x_0),\nabla\phi(x_0),D^2\phi(x_0))
&& \text{by definition~\ref{def:consistent}}
  \\
  &= F_*(x_0, \bar{u}(x_0),\nabla\phi(x_0),D^2\phi(x_0)),
\end{align*}
which shows that $\bar{u}$ is a subsolution.  

By a similar argument,  we can show that $\underline{u}$ is a super-solution.  
\end{proof}

\begin{lemma}[Stability of Maxima]\label{lem:sequences}
Suppose the family $u^\e$ is bounded uniformly in $\e$.
Define
\[ \bar{u}(x) = \limsup_{\e\to0,y\to x} u^\e(y) \in USC(\bar{\Omega}), \]
Given a smooth function $\phi$, let $x_0$ be a strict global maximum of $\bar{u}-\phi$.  Then there exist sequences:
\[
\begin{cases}
\e_n \to 0\\
y_n \to x_0\\
u^{\e_n}(y_n) \to \bar{u}(x_0)
\end{cases}
\]
where $y_n$ is a global maximizer of $u^{\e_n}-\phi$.
\end{lemma}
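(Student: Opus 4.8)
The plan is to exploit that $\Omega$ is bounded, so $\bar{\Omega}$ is compact, together with the continuity of the (piecewise-linearly extended) functions $u^\e$, which guarantees that $u^\e - \phi$ attains a global maximum on $\bar{\Omega}$; call a maximizer $y_\e$. The whole argument is then to show that along a suitable sequence $\e_n \to 0$ these maximizers converge to $x_0$ and carry the value $u^{\e_n}(y_n)$ to $\bar{u}(x_0)$. The only structural ingredient beyond compactness is the definition of the relaxed upper limit $\bar{u}$, which I will use in two complementary ways: that its value at $x_0$ is \emph{achieved} by some sequence, and that it dominates \emph{every} sequential upper limit.

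First I would unwind the definition $\bar{u}(x_0) = \limsup_{\e\to 0,\, y\to x_0} u^\e(y)$ to produce sequences $\e_k \to 0$ and $z_k \to x_0$ with $u^{\e_k}(z_k) \to \bar{u}(x_0)$. Next, letting $y_{\e_k}$ be a global maximizer of $u^{\e_k} - \phi$ (if $u^\e$ is merely bounded rather than continuous, one takes near-maximizers with error tending to $0$, which changes nothing), I would use compactness of $\bar{\Omega}$ to pass to a subsequence along which $y_{\e_k} \to \bar{y}$ for some $\bar{y} \in \bar{\Omega}$; here the uniform bound on the family ensures $\bar{u}$ is finite everywhere.

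The heart of the argument is to sandwich $\bar{u} - \phi$ at $\bar{y}$ and $x_0$. On one side, since $y_{\e_k}$ maximizes, $u^{\e_k}(y_{\e_k}) - \phi(y_{\e_k}) \ge u^{\e_k}(z_k) - \phi(z_k)$, whose right-hand side tends to $\bar{u}(x_0) - \phi(x_0)$; hence $\liminf_k\big(u^{\e_k}(y_{\e_k}) - \phi(y_{\e_k})\big) \ge \bar{u}(x_0) - \phi(x_0)$. On the other side, because $\e_k \to 0$ and $y_{\e_k} \to \bar{y}$, the definition of the relaxed upper limit gives $\limsup_k u^{\e_k}(y_{\e_k}) \le \bar{u}(\bar{y})$, and continuity of $\phi$ yields $\limsup_k\big(u^{\e_k}(y_{\e_k}) - \phi(y_{\e_k})\big) \le \bar{u}(\bar{y}) - \phi(\bar{y})$. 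Combining gives $\bar{u}(x_0) - \phi(x_0) \le \bar{u}(\bar{y}) - \phi(\bar{y})$, while $x_0$ being a global maximum forces the reverse inequality; thus $\bar{u} - \phi$ takes the same value at $\bar{y}$ and $x_0$. Since $x_0$ is a \emph{strict} global maximum, this identity can hold only if $\bar{y} = x_0$, which is exactly where strictness is indispensable.

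With $\bar{y} = x_0$ the two one-sided estimates collapse to an equality, so $u^{\e_k}(y_{\e_k}) - \phi(y_{\e_k}) \to \bar{u}(x_0) - \phi(x_0)$, and continuity of $\phi$ upgrades this to $u^{\e_k}(y_{\e_k}) \to \bar{u}(x_0)$. Relabeling the subsequence as $(\e_n, y_n)$ delivers the three claimed convergences. I expect the main obstacle to be a careful justification of the upper bound $\limsup_k u^{\e_k}(y_{\e_k}) \le \bar{u}(\bar{y})$ directly from the double limit defining $\bar{u}$, since this is the step that converts the abstract relaxed limsup into a usable semicontinuity estimate; the remaining work is bookkeeping with the sandwich and the strictness of the maximum.
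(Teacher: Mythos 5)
Your proof is correct, and while it uses the same essential ingredients as the paper's, it arranges them along a genuinely different route. The paper (after normalizing $\bar{u}(x_0)=\phi(x_0)=0$) first proves convergence of the \emph{values}, showing $u^{\e_n}(y_n)-\phi(y_n)\to 0$, and then localizes the maximizers by contradiction: if $\abs{y_n-x_0}>R$ along a subsequence, strictness together with upper semicontinuity of $\bar{u}-\phi$ on the compact set $\{\abs{y-x_0}\ge R\}\cap\bar{\Omega}$ yields a uniform gap $\bar{u}-\phi<-K$ there, incompatible with the values tending to $0$. You instead localize first: extract $y_{\e_k}\to\bar{y}$ by compactness and identify $\bar{y}=x_0$ via the sandwich $\bar{u}(x_0)-\phi(x_0)\le\liminf_k\bigl(u^{\e_k}(y_k)-\phi(y_k)\bigr)\le\limsup_k\bigl(u^{\e_k}(y_k)-\phi(y_k)\bigr)\le\bar{u}(\bar{y})-\phi(\bar{y})\le\bar{u}(x_0)-\phi(x_0)$, after which the value convergence is automatic. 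Your arrangement buys a real gain in rigor at exactly the step you flagged: the sequential bound $\limsup_k u^{\e_k}(y_k)\le\bar{u}(\bar{y})$ follows directly from the two-parameter definition of $\bar{u}$ (for any $\delta>0$, eventually $\e_k<\delta$ and $\abs{y_k-\bar{y}}<\delta$, so $u^{\e_k}(y_k)$ is dominated by the supremum defining $\bar{u}(\bar{y})$ up to $\delta$), whereas the paper's pointwise inequality $u^{\e_n}(y_n)\le\bar{u}(y_n)+\delta$ at the \emph{moving} points $y_n$ is not literally a consequence of the definition, since $\bar{u}$ need not be lower semicontinuous: a family of spikes such as $u^\e(y)=\max\{0,\,1-\abs{y-\e}/\e^2\}$ on $(-1,1)$ has $\bar{u}(0)=1$ but $\bar{u}(y)=0$ for $y\ne 0$, so with $y_n=\e_n\to 0$ one gets $u^{\e_n}(y_n)=1>\bar{u}(y_n)+\delta$ for all $n$; the repair is precisely the limit-point argument you give, evaluating $\bar{u}$ at $\bar{y}$ rather than at $y_n$. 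Two minor remarks: your conclusion holds along a subsequence, which is all the lemma asserts (the paper gets the full sequence $y_n\to x_0$ for its chosen $\e_n$, but nothing downstream needs this); and your near-maximizer aside is unneeded here, since the piecewise-linearly interpolated grid functions are continuous on the compact set $\bar{\Omega}$, so exact maximizers exist, as the statement presumes.
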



\begin{proof}[Proof of Lemma~\ref{lem:sequences}]
From the definition of the limit superior, we can find sequences
\[ \e_n \to 0, \quad z_n \to x_0 \]
such that
\[ u^{\e_n}(z_n) \to \bar{u}(x_0). \]
Now we define $y_n \in \bar{\Omega}$ to be maximizers of 
$u^{\e_n}(x)-\phi(x). $

We have
\[ u^{\e_n}(y_n)-\phi(y_n) \geq u^{\e_n}(z_n) - \phi(z_n) \to \bar{u}(x_0) - \phi(x_0) = 0. \]
Also, for any $\delta>0$ and large enough $n$,
\[ u^{\e_n}(y_n)-\phi(y_n) \leq \bar{u}(y_n)-\phi(y_n) + \delta \leq \bar{u}(x_0)-\phi(x_0) + \delta = \delta. \]
Thus we have
\[ u^{\e_n}(y_n)-\phi(y_n) \to 0. \]

Now suppose we do not have $y_n \to x_0$.  Then (possibly through a subsequence) there is an $R >0$ such that
\[ \abs{y_n - x_0} > R. \]
Also, since the maximum is strict, global, and unique, there is a $K>0$ such that 
\[ \bar{u}(y) - \phi(y) < -K <0 \]
whenever $\abs{y-x_0} > R$.

Thus for any $\delta>0$ and large enough $n$,
\[ u^{\e_n}(y_n) - \phi(y_n) \leq \bar{u}(y_n) -\phi(y_n) + \delta < -K + \delta \to -K < 0, \]
which contradicts the fact that $u^{\e_n}(y_n)-\phi(y_n) \to 0$.  We conclude that
\[ y_n \to x_0. \]

Finally, it is clear that
\begin{align*}
\abs{u^{\e_n}(y_n) - \bar{u}(x_0)} &= \abs{u^{\e_n}(y_n)-\phi(x_0)}\\
  &\leq \abs{u^{\e_n}(y_n)-\phi(y_n)} + \abs{\phi(y_n)-\phi(x_0)} \\
  &\to 0.
\end{align*}
Therefore,
\[ u^{\e_n}(y_n) \to \bar{u}(x_0). \qedhere \]
\end{proof}

\subsection{Solutions  of nearly monotone finite difference methods}\label{sec:nearlyMonotone}

%
%
%
%

\autoref{thm:converge} assumed existence and stability of solutions to nearly monotone schemes.  We show next that this follows from  well-posedness of the underlying monotone schemes.

For simplicity, we work with grid functions, i.e. we assume that $\Omega$ is a finite set, for example a finite difference grid, which is identified with $\R^N$.   For general approximation schemes, some other form of compactness can be used to achieve the same results.

\begin{lemma}[Existence and Stability of nearly monotone schemes]\label{lem:exist}
Suppose that solutions exist and are stable for the inhomogeneous problem for monotone scheme~
\[
\FM[u] + g = 0
\]
If $\FN$ is a continuous perturbation, then stable solutions exist for the nearly monotone scheme~\eqref{nearMon}.
\end{lemma}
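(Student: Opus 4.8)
The plan is to recast the problem as a fixed point equation and apply Brouwer's theorem in $\R^N$. Since $\Omega$ is a finite grid, every grid function is a point of $\R^N$, and $\FN[u] \in \R^N$ for each $u$. For a given grid function $u$, the hypothesis guarantees that the inhomogeneous monotone problem
\[
\FM[v] + \FN[u] = 0
\]
admits a stable solution $v$; denote by $T(u)$ such a solution. Any fixed point $v^\ast = T(v^\ast)$ then satisfies $\FM[v^\ast] + \FN[v^\ast] = 0$, which by Definition~\ref{def:nearMon} is precisely a solution of the nearly monotone scheme~\eqref{nearMon}. Thus it suffices to produce a fixed point of $T$.

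First I would establish an invariant ball. By the perturbation bound in the definition of $\FN$, every source term $g = \FN[u]$ satisfies $\norm{g}_\infty \le m(\e)$, independently of $u$. Reading the stability hypothesis as a bound on solutions of $\FM[v] + g = 0$ that is uniform over source terms with $\norm{g}_\infty \le m(\e)$, we obtain a constant $C = C(\e)$ with $\norm{T(u)}_\infty \le C$ for all $u$. Hence $T$ maps the closed ball $K = \{v \in \R^N : \norm{v}_\infty \le C\}$ into itself, and $K$ is compact and convex.

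The central step is the continuity of $T$. Here I would use that $\FN$ is continuous by hypothesis, so $u \mapsto \FN[u]$ is continuous, together with continuous dependence of the solution of the monotone problem on its source term. The latter I would obtain from the comparison principle for the monotone scheme: uniqueness makes $T$ single-valued, and a compactness argument then yields continuity. Concretely, if $u_k \to u$, then the source terms converge and the solutions $v_k = T(u_k)$ all lie in the compact set $K$; any subsequential limit $v^\ast$ satisfies $\FM[v^\ast] + \FN[u] = 0$ by continuity of $\FM$, hence equals $T(u)$ by uniqueness, so the whole sequence converges to $T(u)$.

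With $T \colon K \to K$ continuous and $K$ compact and convex, Brouwer's fixed point theorem yields a fixed point $v^\ast$, which solves~\eqref{nearMon}. Its stability follows because $v^\ast$ solves the inhomogeneous monotone problem with source $\FN[v^\ast]$, whose norm is at most $m(\e) \to 0$; the stability hypothesis then bounds $v^\ast$ independently of $\e$. I expect the continuity of $T$ to be the main obstacle, since it relies on the monotone scheme having unique solutions that depend continuously on the data. If uniqueness is unavailable, the map $T$ becomes set-valued and one must instead invoke Kakutani's fixed point theorem, which requires verifying that $T$ has closed graph and nonempty convex values, the convexity of the solution set being the delicate point.
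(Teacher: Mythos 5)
Your proposal is correct and takes essentially the same approach as the paper: both produce a solution as a Brouwer fixed point of the map $u \mapsto \mathcal{S}(\FN[u])$, where $\mathcal{S}$ is the solution operator of the inhomogeneous monotone problem $\FM[v]+g=0$, with invariance of a ball in $\R^N$ coming from the uniform bound $m(\e)$ on the perturbation, and both obtain stability by regarding any solution $u^\e$ as solving $\FM[v]+\FN[u^\e]=0$ and invoking stability of the monotone scheme. The only difference is one of detail: you derive continuity of the solution map from uniqueness plus a compactness argument (and correctly flag Kakutani's theorem as the fallback if uniqueness fails), whereas the paper simply asserts continuity of $\mathcal{S}$ as part of its well-posedness hypothesis.
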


\begin{proof}
\newcommand{\Su}{\mathcal{S}}
\emph{Existence}
Fix $\e>0$.  Write $u = \Su(g)$ for the solution operator 
  of the scheme
\[ 
\FM[u] + g= 0. 
\]
By assumption, $\FN$ is uniformly  bounded on its domain,  
\[
\FN(u)  \le R, \quad \text{ for all } u.
\]
Since $\Su$ is continuous, we have
\[
\norm {\Su(\FN) } \le R_2.
\]
In particular, for the ball $B_{R_2} \subset \R^N$, 
\[
\Su(\FN(B_{R_2})) \subset B_{R_2}.
\]
Applying Brouwer's fixed point theorem, we concluded that there is a fixed point $\Su(\FN(u^*)) = u^*$, which means
\[
\FM[u^*] + \FN[u^*] = 0.
\]
Thus we have identified a solution of the nearly monotone scheme.

\emph{Stability} 
We wish to show that any solution $u^\e$ of the nearly monotone scheme
can be bounded uniformly as $\e\to0$.  Then $u^\e$ is a solution of 
\[
\FM[v] + \FN[u^\e] =0,
\]
regarding $v$ as the unknown. Since $\FN$ is bounded, continuity of the solution operator for $\FM$ gives a uniform bound for $v$.

\end{proof}

\section{Applications}

\subsection{Filtered schemes}\label{sec:higherOrder}
We will now construct a more accurate nearly monotone scheme, starting from the monotone scheme $\FM$ and the consistent (and more accurate) scheme $\FA$.

This is accomplished by setting 
\bq\label{filteredscheme}
\FN[u] =  \e^\alpha S\left (\frac{\FA[u]-\FM[u]}{\e^\alpha}\right)
\eq
in~\eqref{nearMon},where $S(x)$ is a filter function, as defined in Definition~\ref{defn:filter}.
Since 
$
\norm{\FN} = \e^\alpha
$, ~\eqref{nearMon} is a nearly monotone scheme.

\begin{lemma}[Filtered scheme is accurate]
Suppose that the formal discretization errors of the schemes $F_A$, $F_M$ are $\bO(\e^{\beta_A})$ and $\bO(\e^{\beta_M})$ respectively.  Choose the parameter $\alpha$ so that $\beta_A>\beta_M>\alpha>0$.  
If $\phi$ is smooth, then $F^{\e}[\phi]= \FA[\phi].$
\end{lemma}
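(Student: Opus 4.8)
The plan is to exploit the defining property of the filter function $S$ from Definition~\ref{defn:filter}, namely that it equals the identity in a neighborhood of the origin, together with the accuracy hypotheses on the two underlying schemes. The entire argument rests on showing that, for $\e$ small, the argument passed to $S$ in the definition~\eqref{filteredscheme} of $\FN$ lies inside that neighborhood, so that $S$ acts as the identity and $\FN[\phi]$ reduces exactly to $\FA[\phi]-\FM[\phi]$.

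First I would control the difference $\FA[\phi]-\FM[\phi]$. Subtracting and adding the exact operator $F[\phi]$ and invoking the accuracy definition (Definition~\ref{def:accurate}),
\[
\FA[\phi] - \FM[\phi] = \left(\FA[\phi] - F[\phi]\right) - \left(\FM[\phi] - F[\phi]\right) = \bO(\e^{\beta_M}),
\]
where the coarser error $\bO(\e^{\beta_M})$ dominates because $\beta_A > \beta_M$.

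Next I would divide by $\e^\alpha$ to recover the argument of the filter, obtaining a quantity of size $\bO(\e^{\beta_M-\alpha})$. Since $\beta_M > \alpha$ by hypothesis, the exponent $\beta_M-\alpha$ is strictly positive, so this quantity tends to zero as $\e\to 0$. Hence there is an $\e_0>0$, depending on $\phi$, such that for all $\e<\e_0$ the argument lies in the region where $S$ is the identity. On this range,
\[
\FN[\phi] = \e^\alpha\, S\left(\frac{\FA[\phi]-\FM[\phi]}{\e^\alpha}\right) = \FA[\phi]-\FM[\phi],
\]
and therefore $\Fe[\phi] = \FM[\phi] + \FN[\phi] = \FA[\phi]$, as claimed.

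I do not expect a genuine obstacle here; the content is essentially the bookkeeping of exponents. The only point requiring care is the role of the chained inequality $\beta_A > \beta_M > \alpha > 0$: the first inequality guarantees that the scheme difference is of order $\e^{\beta_M}$ (governed by the coarser monotone scheme), while the second guarantees that after dividing by $\e^\alpha$ the result still vanishes in the limit, which is precisely what allows the argument to enter the identity region of $S$.
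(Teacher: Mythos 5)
Your proof is correct and follows essentially the same route as the paper: bound $\FA[\phi]-\FM[\phi]=\bO(\e^{\beta_M})$ via the accuracy hypotheses, observe that dividing by $\e^\alpha$ gives $\bO(\e^{\beta_M-\alpha})\to 0$ since $\beta_M>\alpha$, and conclude that the filter $S$ acts as the identity so $\FN[\phi]=\FA[\phi]-\FM[\phi]$ and hence $\Fe[\phi]=\FA[\phi]$. If anything, you are slightly more careful than the paper in making explicit the threshold $\e_0$ (depending on $\phi$) below which the argument of $S$ enters the identity region, where the paper compresses this into the shorthand $\bO(\e^{\beta_M-\alpha})<\bO(1)$.
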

\begin{proof}
If $\phi$ is smooth, then 
\begin{align*}
\frac{\FA[\phi]-\FM[\phi]}{\e^\alpha} = \frac{\bO(\e^{\beta_A})+\bO(\e^{\beta_M})}{\e^\alpha}
  = \bO(\e^{\beta_M-\alpha})
  < \bO(1).
\end{align*}
Using~\eqref{filteredscheme}, we find that $\FN[\phi] = \FA[\phi] - \FM[\phi]$, which means $F^{\e}[\phi]= \FA[\phi].$
\end{proof}

\begin{remark}[Singularity detection]
The filtered scheme provides an intrinsic singularity detection mechanism, which is adapted to the discretization (and the equation) itself.  This means fewer points are considered singular, compared to simply using a regularity condition.  The selection principle is illustrated in \autoref{fig:filters}.
\end{remark}

\subsection{Perturbations}
Given the degenerate elliptic~\eqref{PDE}, 
and a continuous operator $G[u]$, we can consider the nearly monotone approximation
\[
\Fe[u] = F[u] +  \e C S(G[u]/C),
\]
for some large constant $C$.
Theorem~\ref{thm:converge} gives a convergence proof.
We consider the special case of a solution $u$ of $F[u]$, where $G[u]$ is bounded by $C$.  Then for $\e$ small enough we recover the unfiltered perturbation.

For numerical methods, if we know \emph{a priori} that the solution satisfies the bounds above, we can use an accurate method. However, for weak solutions of the PDE, this is not the case.

\subsection{Bilaplacian regularization }\label{sec:bilaplace}

The Vanishing Moment Method proposed by Feng and Neilan~\cite{FengFully} for solving the second-order elliptic equation~\eqref{PDE}
where we specify the boundary conditions
\[
u = g, \quad \text{ on } \partial \Omega
\]
by perturbing with the bilaplacian 
\bq\label{eq:vm}  F[u] + \e \Delta^2 u = 0. \eq
This also introduces additional boundary conditions, for example
\[
\Delta u = 0, \text{ on } \partial \Omega. 
\]
Here we provide a proof of convergence of the filtered PDE in the case of smooth solutions.

Consider instead the filtered PDE
\bq\label{eq:vm_fil} \Fe[u] \equiv F[u] + \e C S\left(\frac{\Delta^2u}{C}\right) \eq
along with filtered boundary conditions
\bq
\e S\left(\frac{\Delta u}{C}\right) = 0, \quad \text{ on } \partial \Omega,
\eq
for a large constant $C$.  
The scheme we defined is nearly monotone.
If $\Abs{{\Delta^2u}}, \abs{\Delta u} \le C$ then we recover~\eqref{eq:vm}.  
This follows since if the unperturbed equation is smooth, adding $\e \Delta^2 u$ only improves regularity in the interior.

Theorem~\ref{thm:converge} then provides a convergence proof for the method in the case of smooth solutions.

\section{Computational Results}\label{sec:compute}
In this section we implement the filtered scheme \eqref{filteredscheme} for the \MA equation, using a monotone scheme and standard finite differences.

\subsection{Numerical implementation}\label{sec:implement}
In this section, we implement a convergent filtered finite difference scheme for the \MA equation.  The purpose of this section is to demonstrate that the filtered scheme:
\begin{enumerate}
\item Allows for higher accuracy {than} the monotone scheme.
\item Can be solved efficiently using Newton's method.
\end{enumerate}

In practice, we do not expect the results to be much better than for the hybrid scheme.  The improvement is that the hybrid scheme was overly conservative, so the filtered scheme reduces to the accurate scheme.  But the real advantage of this method is that it can be used in situations (such as the Optimal Transportation problem) where the hybrid scheme may not converge. 

We now solve the \MA equation using the filtered scheme
\[ F_H^{h,d\theta}[u] = F_M^{h,d\theta}[u] + \e(h,d\theta) S\left(\frac{F_A^h[u]-F_M^{h,d\theta}[u]}{\e(h,d\theta)}\right). \]
Here  $F_M$ is the monotone scheme described in~\autoref{sec:MAmon} and $F_A$ is the formally second order accurate scheme  obtained using a standard centred-difference discretisation (see~\cite{ObermanFroeseFast} for details).  
In addition, we have regularized the $\max$ operator to allow the gradient to be computed, as in~\cite{ObermanFroeseFast}.
In the computations below, we used  $\e(h,d\theta) = \sqrt{h} + d\theta/10$.

We solve the resulting system of nonlinear equations using Newton's method:
\[ u^{n+1} = u^n - (\nabla F_H[u^n])^{-1}F_H[u^n] \]
where the Jacobian will be given by
\[ \nabla F_H[u] = \left(1-S'[u]\right)\nabla F_M[u] + S'[u]\nabla F_A[u]. \]

The derivative of the filter~\eqref{eq:filter1} is given by
\[
S'(x) = \begin{cases}
1 & \abs{x} < 1\\
-1 & 1 < \abs{x} < 2\\
0 & \abs{x}>2.
\end{cases}
\]
However, allowing this derivative to take on negative values can lead to poorly conditioned or ill-posed linear systems.  Instead, we approximate the Jacobian by
\[ \tilde{\nabla} F_H[u] = \left(1-S'[u]\right)\nabla F_M[u] + \max\{S'[u],0\}\nabla F_A[u]. \]

To properly assess the speed and accuracy of our filtered method, we also solve the \MA equation using
\begin{enumerate}
\item The monotone scheme described in~\autoref{sec:MAmon}.
\item The \emph{a priori} hybrid scheme presented in~\cite{ObermanFroeseFast} (which has no convergence proof).
\item The formally second-order standard scheme solved using one of the methods of~\cite{BenamouFroeseObermanMA}.
\end{enumerate}

\subsection{Computational examples}\label{sec:examples}
We use our method to compute the following four representative examples described in~\cite{ObermanFroeseMATheory}.  

Throughout these definitions, we use $\xv = (x,y)$ to denote a general point in $\R^2$ and $\xo = (0.5,0.5)$ for the center of the domain.

Given a solution $u(x,y)$, we obtain the mapping $\grad u(x,y)$ from the computational square to the image of the square.  We visualize the optimal mapping\autoref{fig:solns} by plotting the images of the constant $x$ and constant $y$ lines from the square on the image set.

The first example, which is smooth and radial, is given by
\bq\label{eq:c2} 
u(\xv) = \exp \left( \frac{ \norm{\xv-\xo}^2}{2} \right),
\qquad 
f(\xv) = \left(1+ \norm{\xv-\xo}^2\right)\exp\left( \norm{\xv-\xo}^2 \right).
\eq
The second example, which is $C^1$, is given by
\bq\label{eq:c1} 
u(\xv) = \frac{1}{2}\left( (\norm{\xv-\xv_0} -0.2)^+\right )^2, 
\quad
f(\xv) = 
\left( 
1 - \frac{0.2}{\norm{\xv-\xv_0}}
\right)^+.
\eq

The third example is smooth in the interior of the domain, but has an unbounded gradient near the boundary point $(1,1)$.  The solution is given by
\bq\label{eq:blowup} 
u(\xv) = -\sqrt{2-\norm{\xv}^2},
\qquad 
f(\xv) = 2 {\left(2-\norm{\xv}^2\right)^{-2} }.
\eq

The final example is the cone, which is only Lipschitz continuous.
\bq\label{eq:cone} 
u(\xv) ={\norm{\xv-\xv_0}},
\qquad
f = \mu = \pi \,\delta_{\xv_0}.
\eq
In fact, this solution is not even a viscosity solution; it must be understood using the more general notion of an Aleksandrov solution.
In order to approximate the solution on a grid with spatial resolution $h$ using viscosity solutions, we approximate the measure $\mu$ by its average over the ball of radius $h/2$, which gives
\[ 
f^h  = 
\begin{cases}
4/h^2 &  \text{ for } \norm{\xv - \xv_0} \leq h/2,\\
0 & \text{ otherwise.}
\end{cases}
\]
The solutions and the corresponding mappings are plotted in \autoref{fig:solns}.

\newcommand{\ww}{.35}
\begin{figure}[htdp]
	\centering
       \subfloat[]{
       \label{fa}
       \hspace{-.2\textwidth}
       \includegraphics[width=\ww\textwidth]{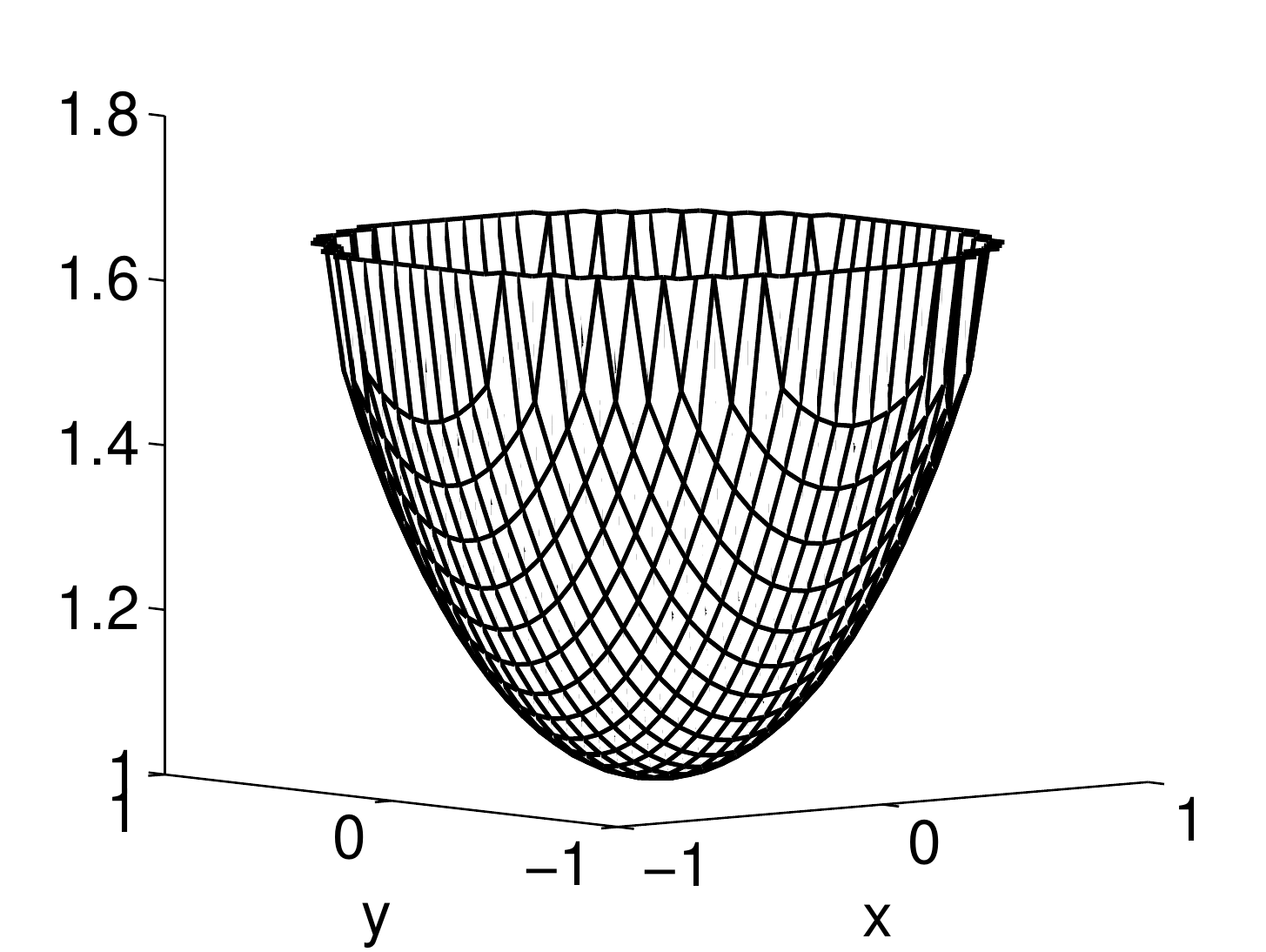}  
       \includegraphics[width=\ww\textwidth]{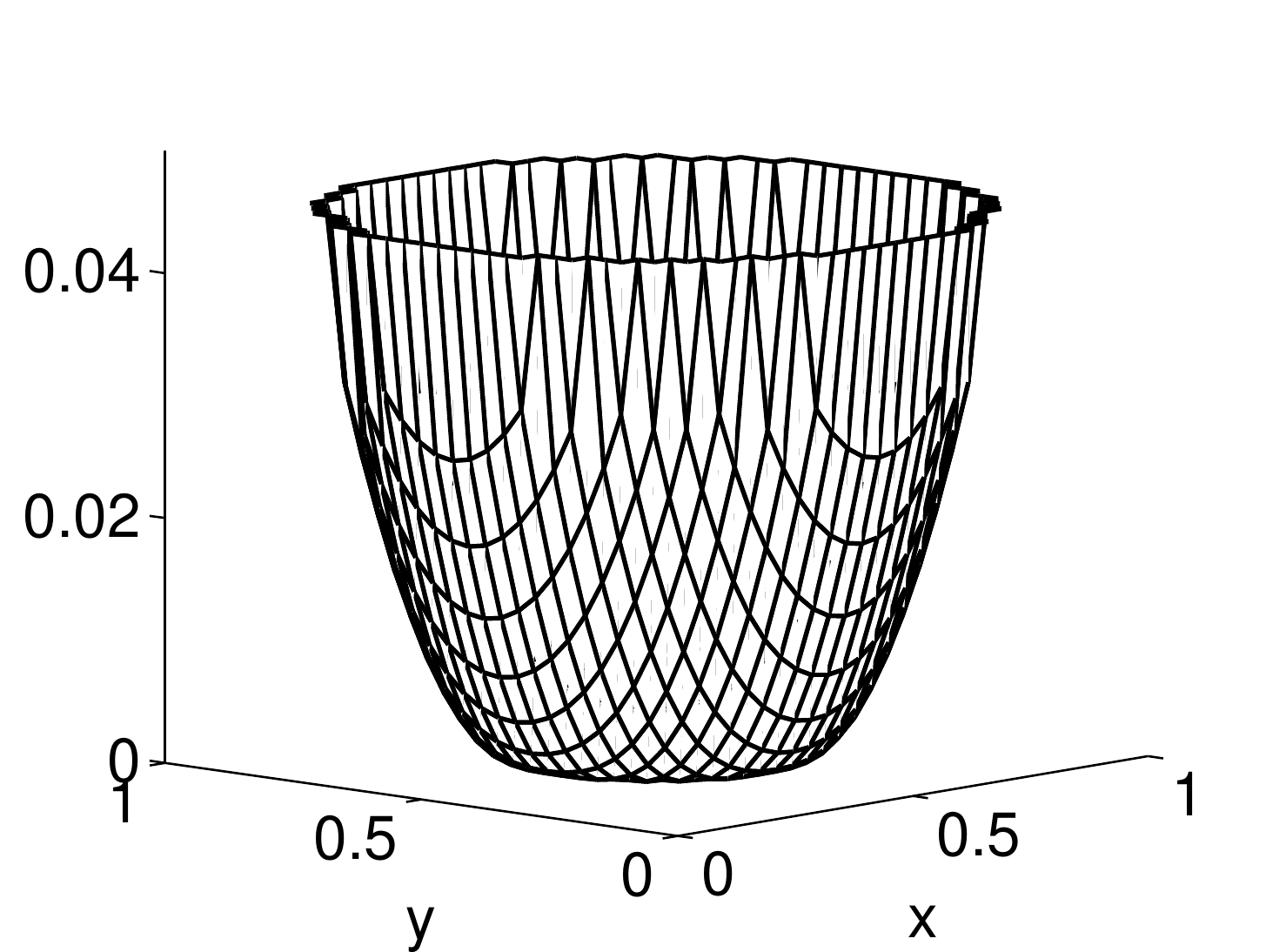}
       \includegraphics[width=\ww\textwidth]{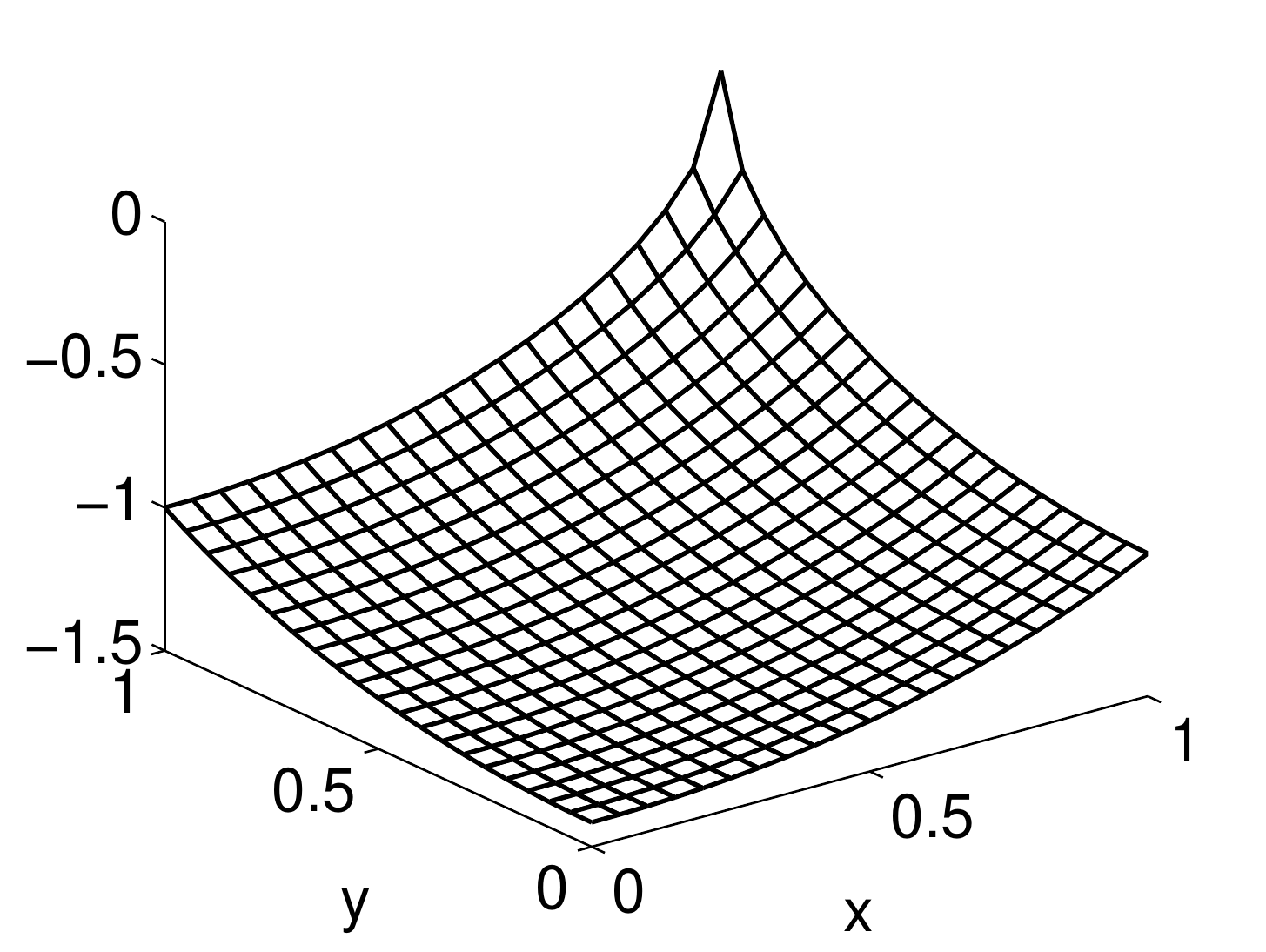}
       \includegraphics[width=\ww\textwidth]{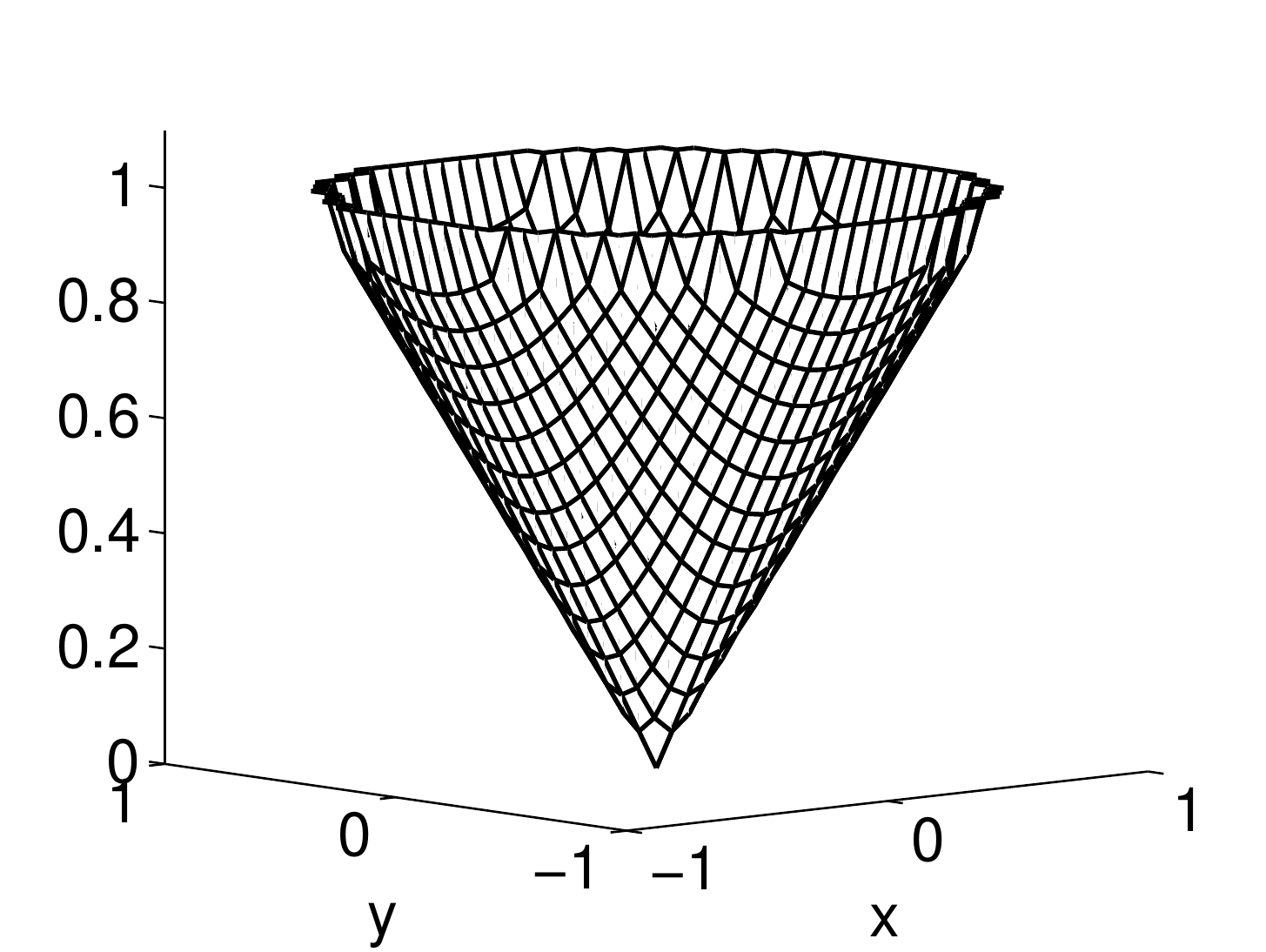}
       }
       
     \subfloat[]{
            \label{fb}
            \hspace{-.2\textwidth}
       \includegraphics[width=\ww\textwidth]{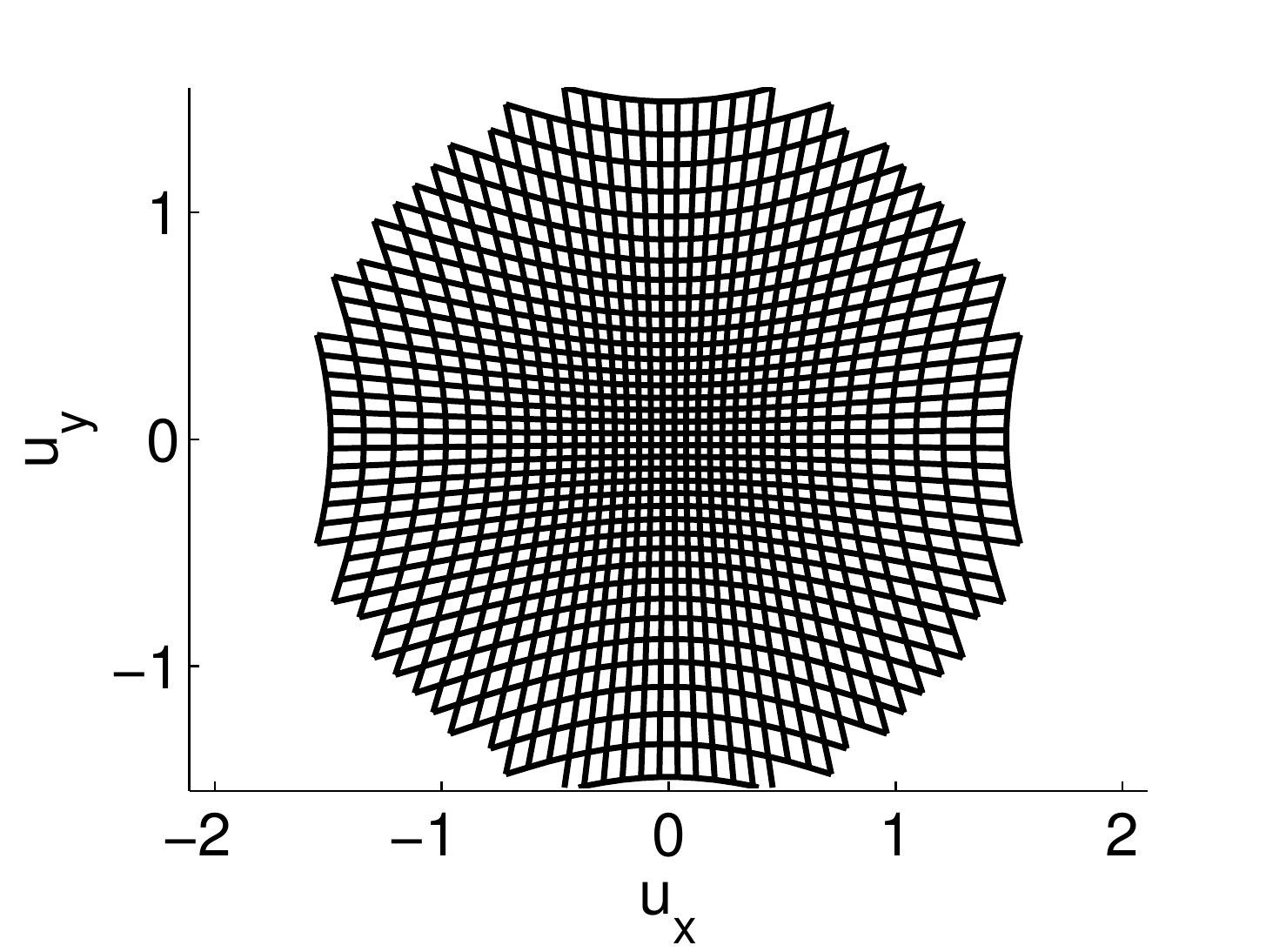} 
       \includegraphics[width=\ww\textwidth]{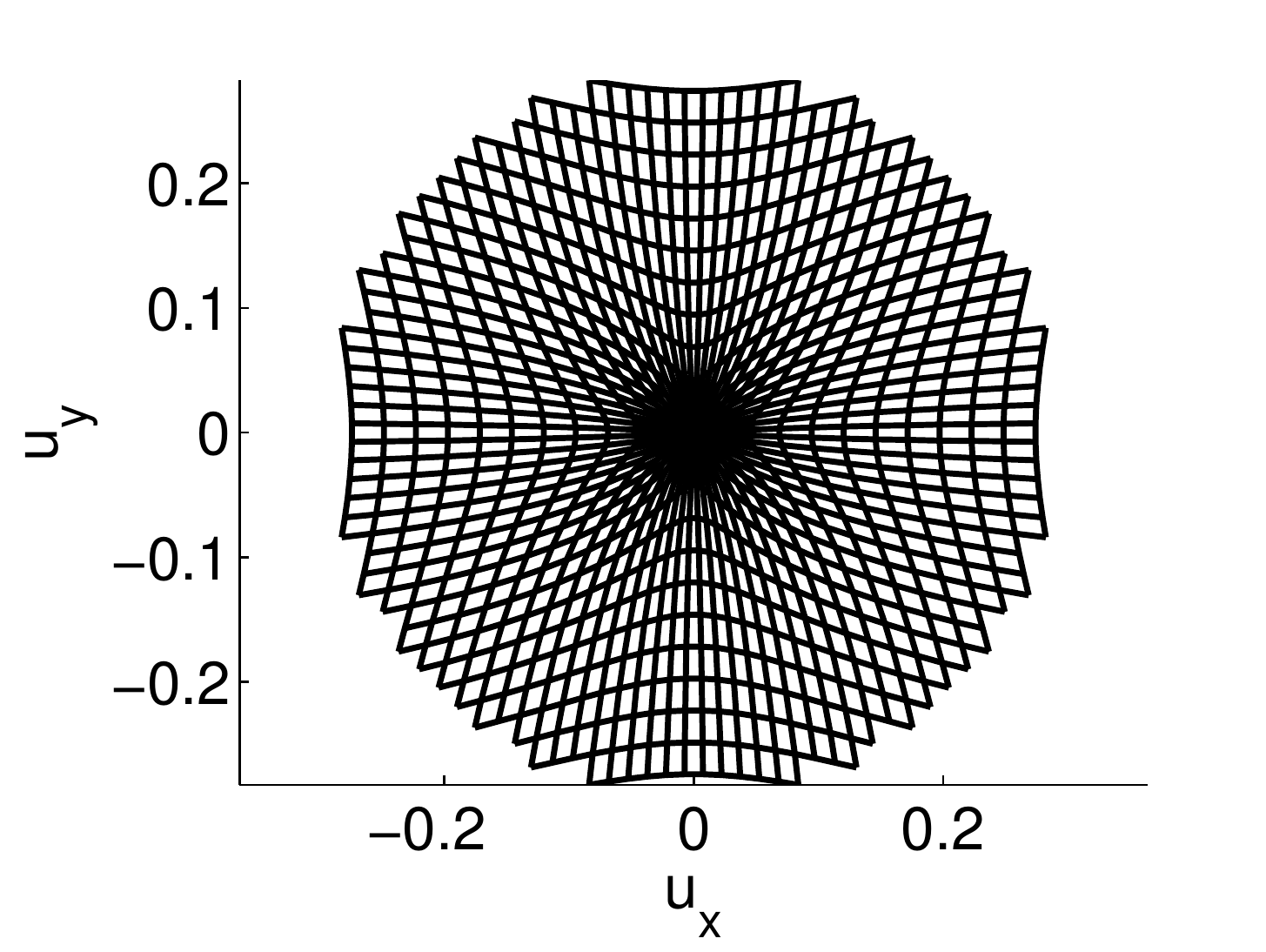}
       \includegraphics[width=\ww\textwidth]{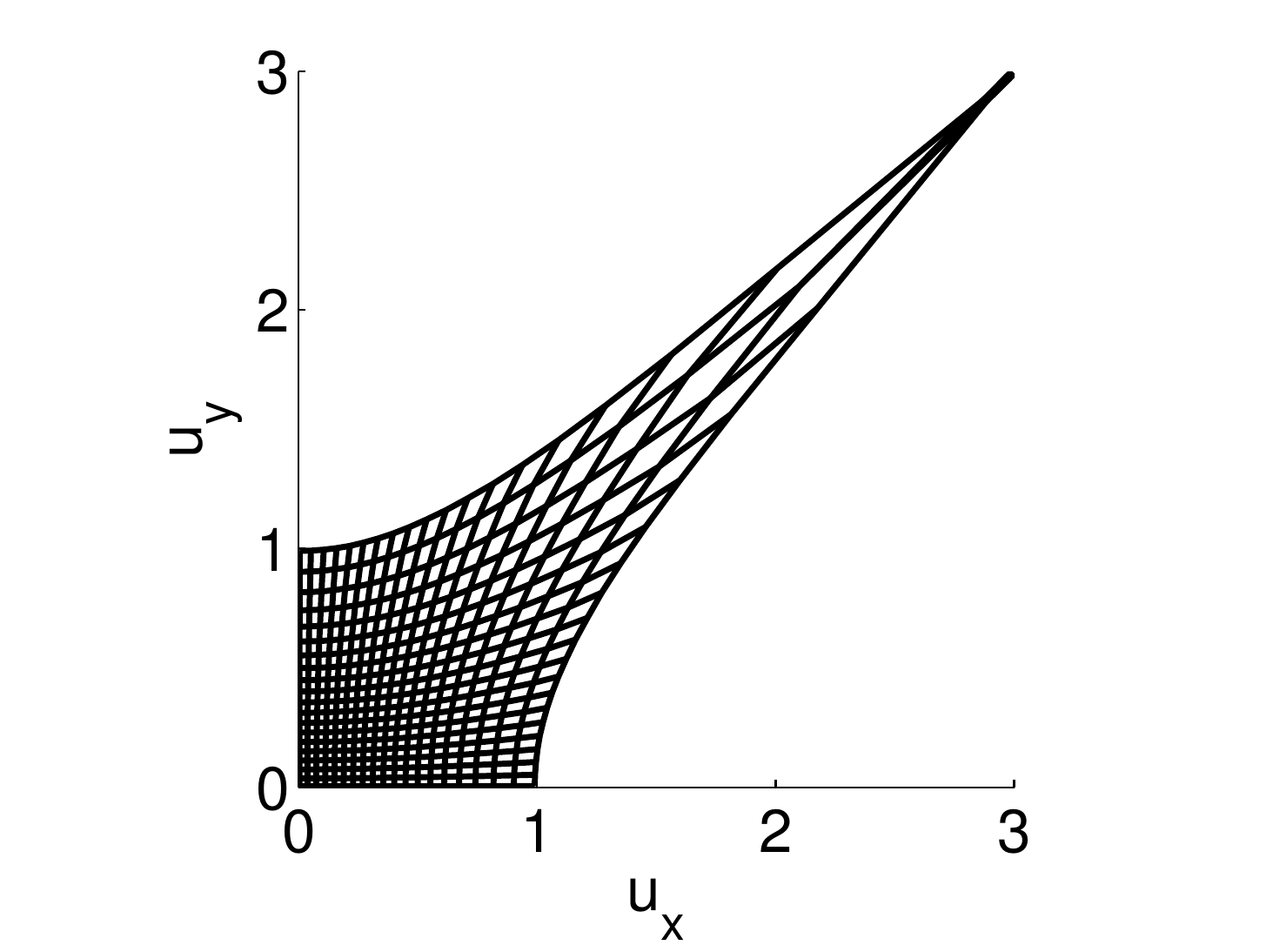}
        \hspace{\ww\textwidth}
}
\caption{
Top row: surface plot of the solutions, $u(x,y)$.  Bottow row: Mappings $\grad u(x,y)$ visualized by plotting the image of the constant $x$ and $y$ line segment in source domain.
The solutions correspond to (from left to right) 
$C^2$ solution,  $C^1$ solution, solution with blow-up, and cone solution (no mapping in this case, since it is singular). 
} 
\label{fig:solns}
\end{figure}

All computations are performed on the domain $[0,1]\times[0,1]$, which is discretized  on an $N\times N$ grid. We choose $\e(h,d\theta) = \sqrt{h} + d\theta/10$.  Computations are done using the 9, 17, and 33 point stencils.

\subsection{Discussion of computational results}

\emph{Accuracy}
We begin by comparing the maximum error in the filtered scheme to the error in the monotone scheme.
The accuracy is illustrated in \autoref{fig:accuracy} as well as in \autoref{table:errMA}.  As we expect, the filtered scheme results in \emph{improved accuracy}.  
The number of grid points was chosen so that in many examples the directional resolution error dominates the spatial resolution error.  This is the reason that the accuracy of the monotone scheme tapers off.  On the other hand, the hybrid and filtered schemes achieve better accuracy.  For the cone example, the theory does not ensure convergence, and in this case, the standard scheme is more accurate, but the computation time for the standard scheme is very long, and there are no guarantees for this scheme: the accuracy was much worse on other examples. 

Not surprisingly, the gains in accuracy are greatest on the smoothest solutions, when we can reasonably expect the more accurate scheme to be valid.  On more singular examples, the standard scheme may not be valid, and the filtered scheme can be forced to choose the less accurate monotone scheme at many points in the domain.  This is not a limitation of the scheme, though, since choosing the formally more accurate scheme everywhere can lead to instabilities or convergence to the wrong solution.

We also compare our results to the \emph{a priori} hybrid scheme, which, despite lacking a proof of convergence, was experimentally found to be efficient and more accurate than the monotone scheme~\cite{ObermanFroeseFast}.

\begin{figure}[htdp]
\subfloat{
        \hspace{-.2\textwidth}
       \includegraphics[width=\ww\textwidth]{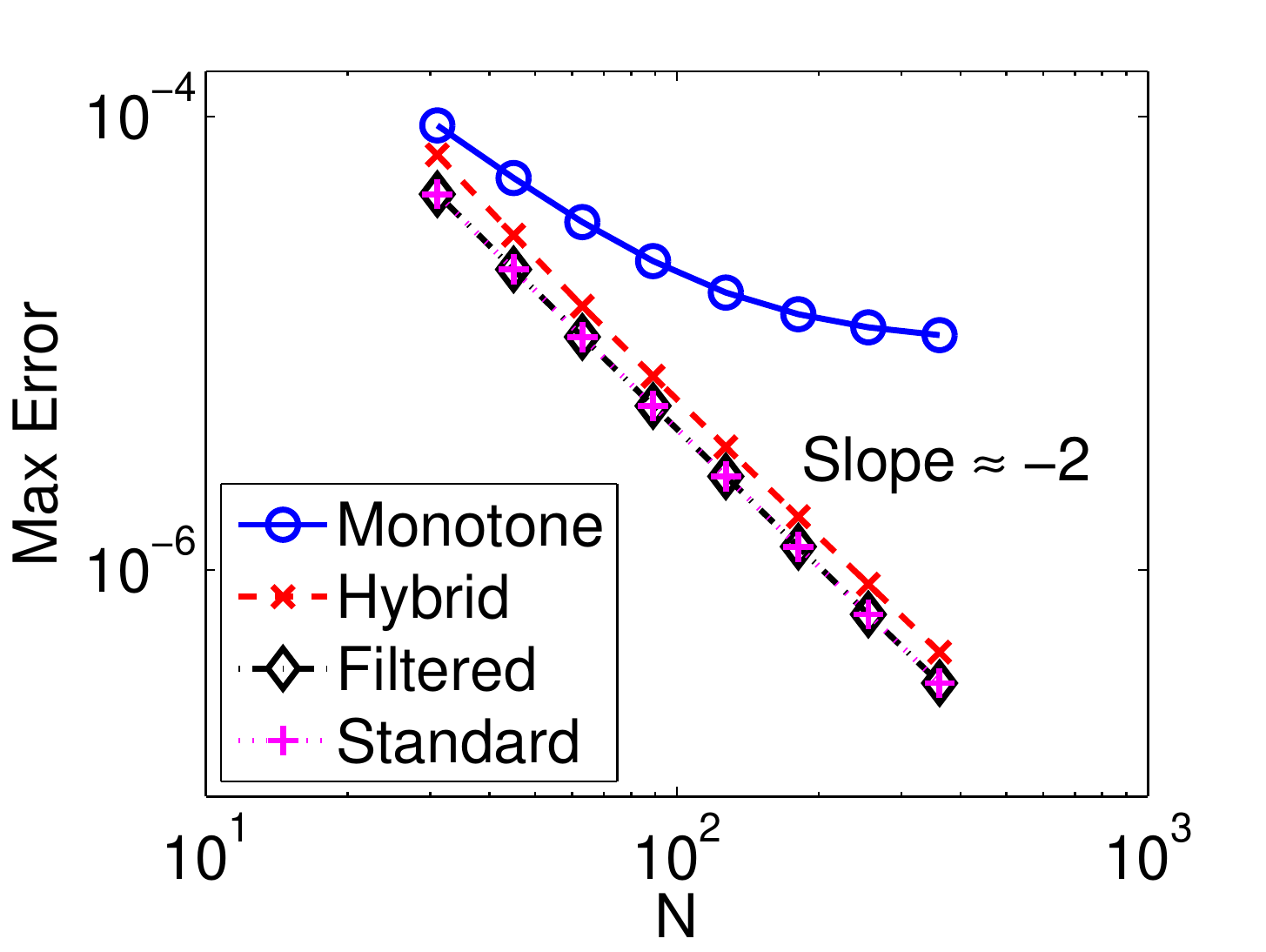}
       \includegraphics[width=\ww\textwidth]{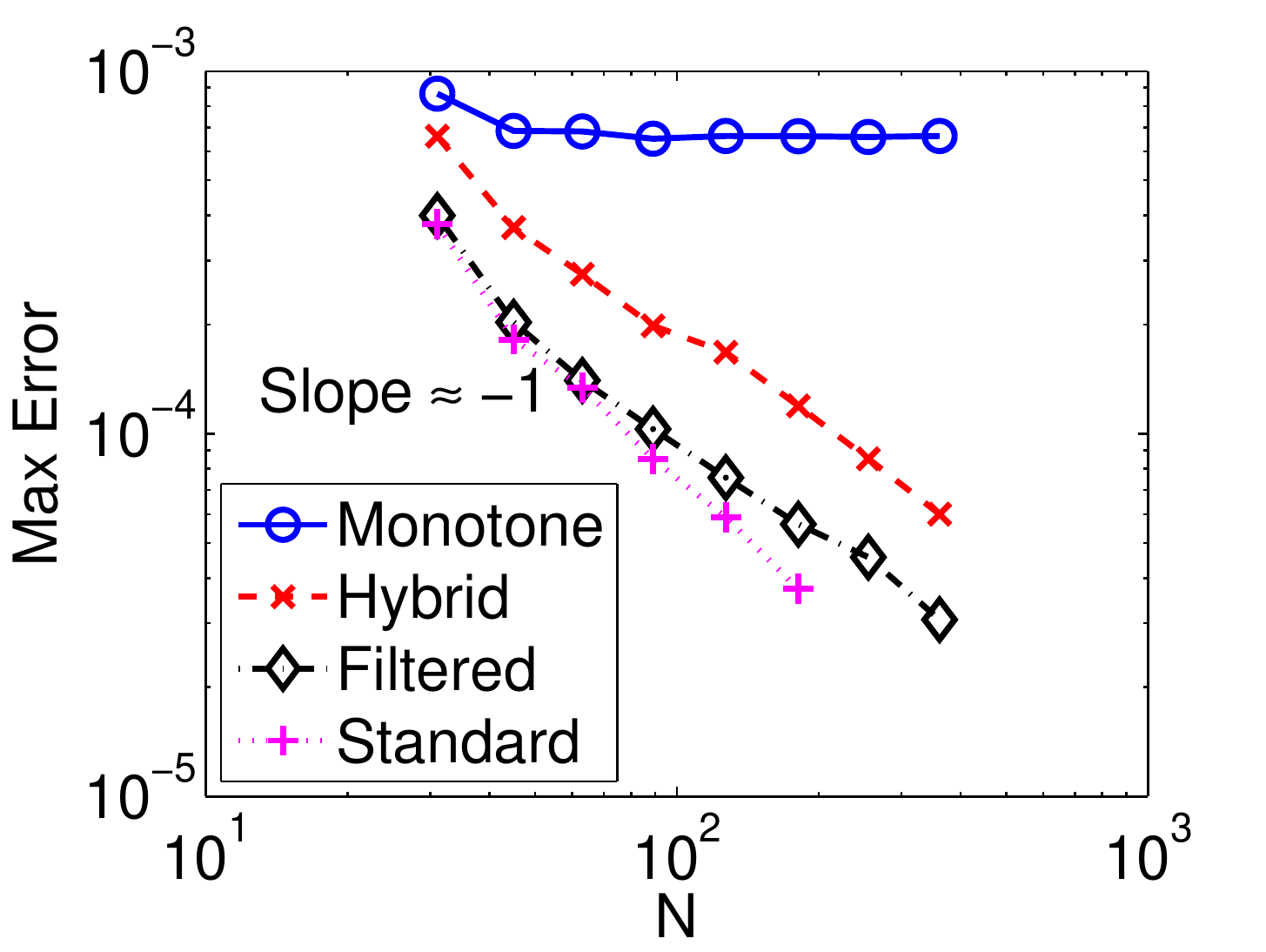}
       \includegraphics[width=\ww\textwidth]{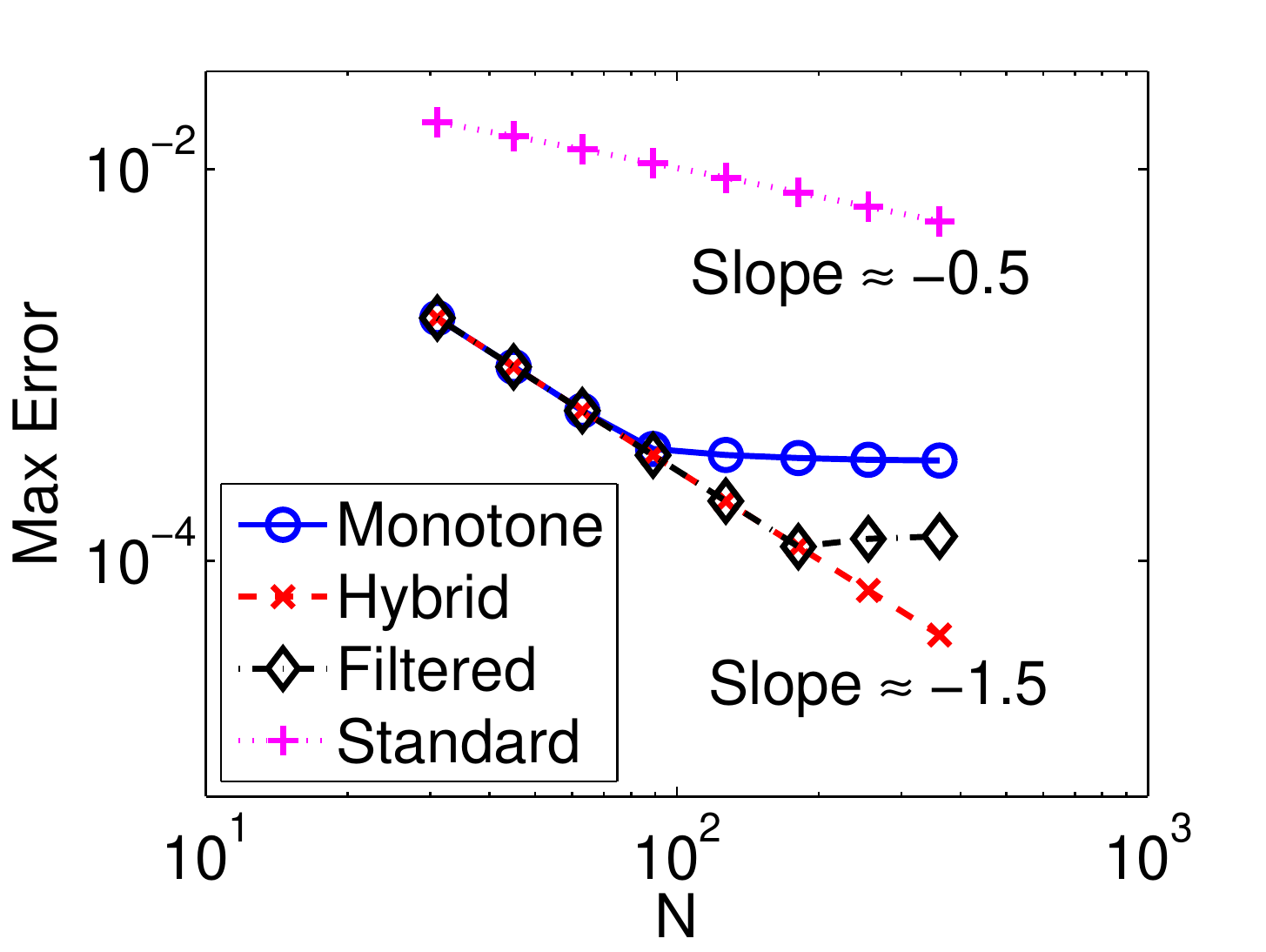}
       \includegraphics[width=\ww\textwidth]{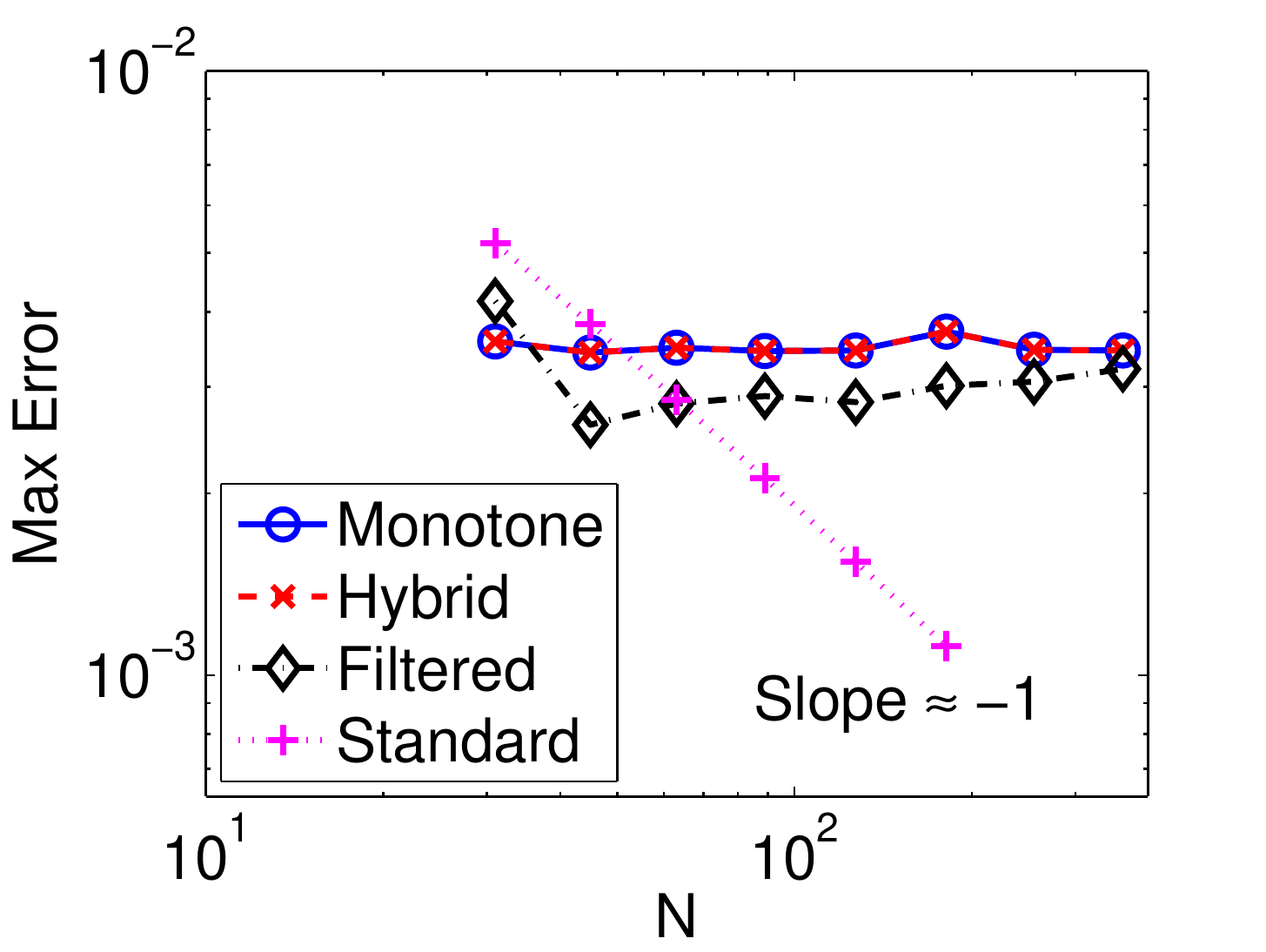}
       }
\caption{
Accuracy of the different schemes.  Solutions are ordered as in \autoref{fig:solns}. 
}
\label{fig:accuracy}
\end{figure}

\autoref{fig:filters} shows the weights of the monotone and accurate schemes determined by the filtered schemes.  For the smooth solution, the accurate method is always used.  Near singularities, the monotone method is selected.
For comparison, the corresponding weights for the hybrid scheme are also shown.  
The filtered schemes uses the accurate method at more locations than the hybrid discretization.   This occurs despite the fact that the hybrid scheme used known regularity results to choose the locations of the more accurate method, whereas the filtered scheme determines the discretization from the equation.

\emph{Computation speed.}
\autoref{table:hybridMA} presents the number of Newton iterations, computation time, and maximum error in each of the computed solutions for the hybrid and filtered schemes on a 17 point stencil.   As a benchmark, the error in the standard scheme is also presented; it has been shown previously that this scheme is much slower than the Newton solver for the hybrid discretization~\cite{ObermanFroeseFast}.  Even on this relatively narrow stencil, the accuracy of the filtered scheme is close to---and in some cases better than---the accuracy achieved using the slow, formally second-order standard scheme.  Overall, there is  \emph{no appreciable difference in accuracy}  between the results obtained using the hybrid and the filtered scheme, though the filtered scheme is slightly more accurate in most of the examples.  (With our particular choice of the parameter $\e(h,d\theta)$ it is slightly less accurate on the blow-up example.  Accuracy can be improved by allowing this parameter to increase).    Of course, the filtered scheme has one big advantage over the hybrid scheme because it comes with a convergence proof, which gives us confidence that the method will continue to perform correctly in other examples.

\begin{figure}[htdp]
       \subfloat[]{
              \label{fc}
       \includegraphics[width=\www\textwidth]{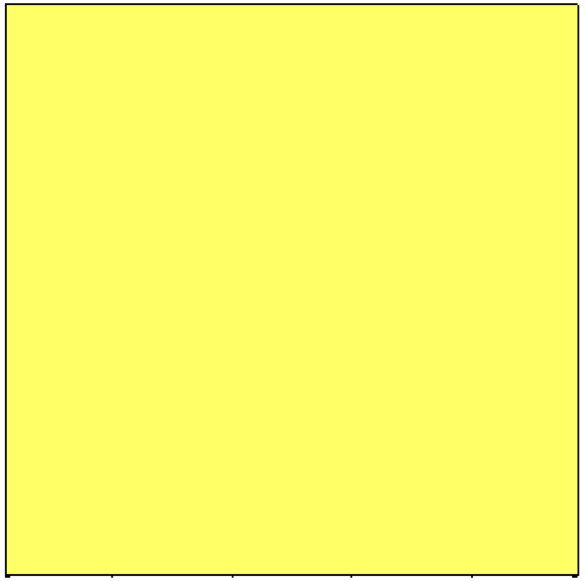}
       \includegraphics[width=\www\textwidth]{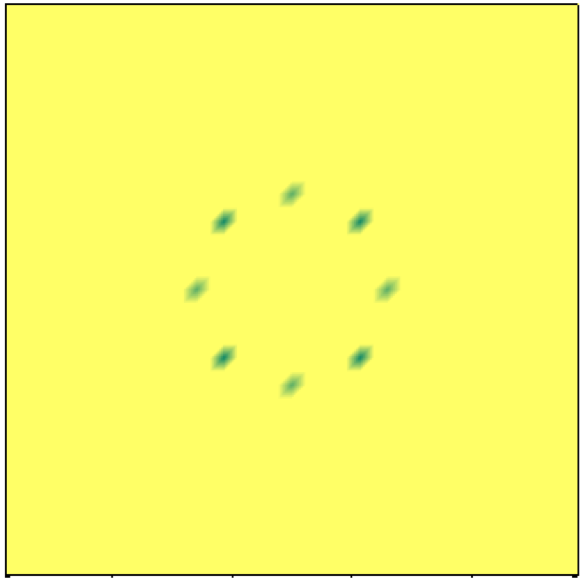}
       \includegraphics[width=\www\textwidth]{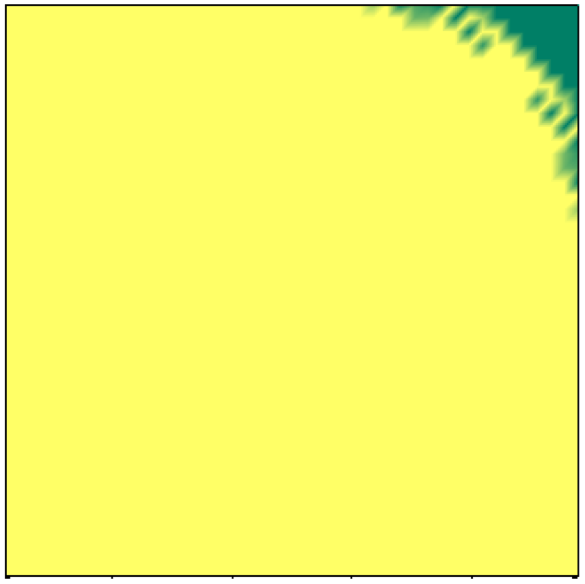}
       \includegraphics[width=\www\textwidth]{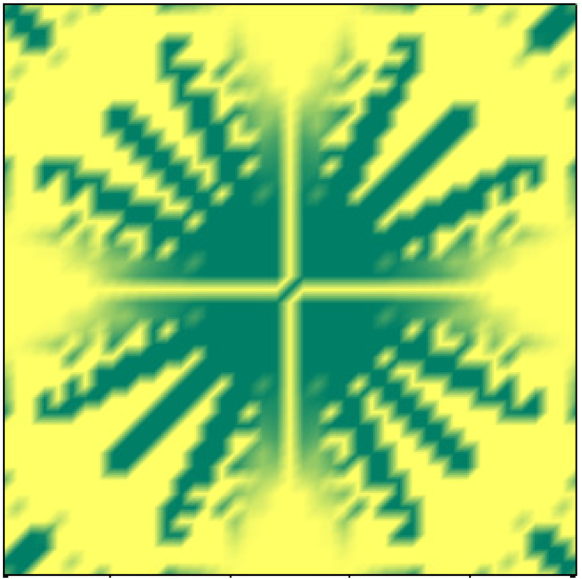}
}

       \subfloat[]{
              \label{fd}
       \includegraphics[width=\www\textwidth]{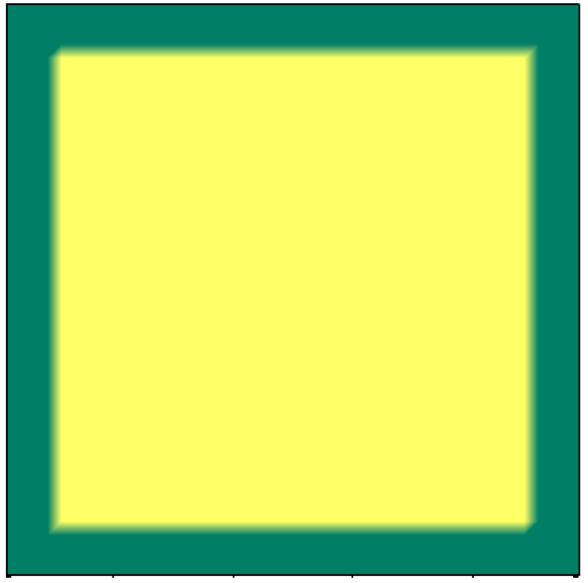}
       \includegraphics[width=\www\textwidth]{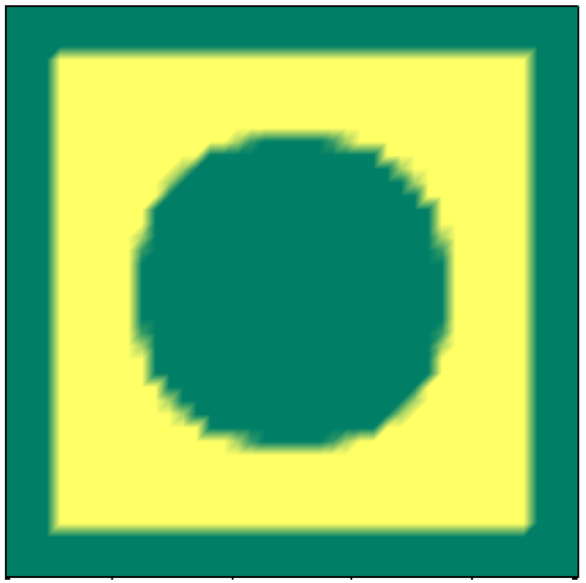}
       \includegraphics[width=\www\textwidth]{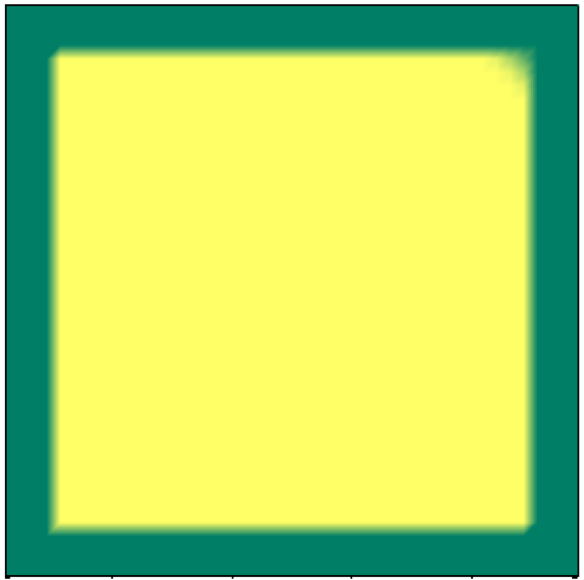}
       \includegraphics[width=\www\textwidth]{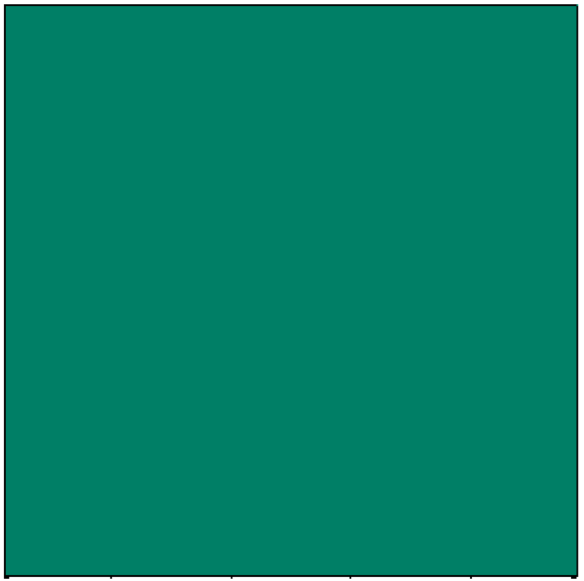}
}
\caption{
Illustration of the weights in the filtered scheme (\ref{fc}),
and the hybrid scheme (\ref{fd}).  The ordering is the same as in \autoref{fig:solns}.
In the second example top figure, green indicates a weight of  5\% on the monotone scheme.  In the third and fourth figures, green indicates the monotone scheme, while yellow indicates the accurate scheme.
} 
\label{fig:filters}
\end{figure}

\begin{table}[htdp]
\centering\footnotesize
\begin{tabular}{ccccccc}
\multicolumn{7}{c}{{Max Error,} $C^2$ Example}\\
N  & \multicolumn{2}{c}{9 Point}   &   \multicolumn{2}{c}{17 Point}  & \multicolumn{2}{c}{33 Point} \\
 &   Monotone & Filtered & Monotone & Filtered & Monotone & Filtered \\
\hline
31 & $9.45\ex{5}$ & $4.54\ex{5}$ & $9.12\ex{5}$& $4.54\ex{5}$ & $9.38\ex{5}$ &$4.54\ex{5}$ \\
63 & $4.91\ex{5}$ & $1.04\ex{5}$ & $3.42\ex{5}$& $1.04\ex{5}$ & $3.40\ex{5}$ &$1.04\ex{5}$  \\
127 & $3.79\ex{5}$& $0.26\ex{5}$ & $1.67\ex{5}$& $0.26\ex{5}$ & $1.39\ex{5}$ &$0.26\ex{5}$ \\
255 & $3.51\ex{5}$& $0.06\ex{5}$ & $1.17\ex{5}$& $0.06\ex{5}$ & $0.66\ex{5}$  &$0.06\ex{5}$ \\
361 & $3.48\ex{5}$& $0.03\ex{5}$ & $1.08\ex{5}$& $0.03\ex{5}$ & $0.51\ex{5}$  &$0.03\ex{5}$ \\
\hline\hline\\
\multicolumn{7}{c}{{Max Error,} $C^1$ Example}\\
N  & \multicolumn{2}{c}{9 Point}   &   \multicolumn{2}{c}{17 Point}  & \multicolumn{2}{c}{33 Point} \\
 &   Monotone & Filtered & Monotone & Filtered & Monotone & Filtered \\
\hline
31  & $21.54\ex{4}$& $3.73\ex{4}$ & $8.66\ex{4}$& $3.99\ex{4}$ & $6.39\ex{4}$ &$3.67\ex{4}$ \\
63  & $21.33\ex{4}$& $1.51\ex{4}$ & $6.82\ex{4}$& $1.40\ex{4}$ & $3.18\ex{4}$ &$1.46\ex{4}$ \\
127 & $21.55\ex{4}$& $0.92\ex{4}$ & $6.63\ex{4}$& $0.76\ex{4}$ & $2.49\ex{4}$ &$0.78\ex{4}$ \\
255 & $21.51\ex{4}$& $0.38\ex{4}$ & $6.58\ex{4}$& $0.46\ex{4}$ & $2.36\ex{4}$ &$0.37\ex{4}$  \\
361 & $21.53\ex{4}$& $0.23\ex{4}$ & $6.62\ex{4}$& $0.31\ex{4}$ & $2.37\ex{4}$ &$0.28\ex{4}$ \\\hline\hline\\
\multicolumn{7}{c}{{Max Error,} Example with Blow-up}\\
N  & \multicolumn{2}{c}{9 Point}   &   \multicolumn{2}{c}{17 Point}  & \multicolumn{2}{c}{33 Point} \\
 &   Monotone & Filtered & Monotone & Filtered & Monotone & Filtered \\
\hline
31  & $1.74\ex{3}$& $1.74\ex{3}$ & $1.74\ex{3}$& $1.74\ex{3}$ & $1.74\ex{3}$ &$1.74\ex{3}$ \\
63  & $0.86\ex{3}$& $0.59\ex{3}$ & $0.59\ex{3}$& $0.59\ex{3}$ & $0.59\ex{3}$ &$0.59\ex{3}$ \\
127 & $0.83\ex{3}$& $0.20\ex{3}$ & $0.35\ex{3}$& $0.20\ex{3}$ & $0.20\ex{3}$ &$0.20\ex{3}$ \\
255 & $0.83\ex{3}$& $0.15\ex{3}$ & $0.33\ex{3}$& $0.13\ex{3}$ & $0.16\ex{3}$ &$0.08\ex{3}$ \\
361 & $0.83\ex{3}$& $0.17\ex{3}$ & $0.33\ex{3}$& $0.13\ex{3}$ & $0.15\ex{3}$ &$0.08\ex{3}$  \\
\hline\hline\\
\multicolumn{7}{c}{{Max Error,} $C^{0,1}$ (Lipschitz) Example}\\
N  & \multicolumn{2}{c}{9 Point}   &   \multicolumn{2}{c}{17 Point}  & \multicolumn{2}{c}{33 Point} \\
 &   Monotone & Filtered & Monotone & Filtered & Monotone & Filtered \\
\hline
31  & $11.83\ex{3}$& $10.42\ex{3}$& $3.56\ex{3}$& $4.16\ex{3}$& $1.61\ex{3}$ & $1.23\ex{3}$  \\
63  & $11.10\ex{3}$& $11.56\ex{3}$& $3.49\ex{3}$& $2.82\ex{3}$& $1.65\ex{3}$ & $1.90\ex{3}$  \\
127 & $11.80\ex{3}$& $10.97\ex{3}$& $3.45\ex{3}$& $2.83\ex{3}$& $1.64\ex{3}$ & $1.11\ex{3}$  \\
255 & $10.47\ex{3}$& $11.03\ex{3}$& $3.46\ex{3}$& $3.06\ex{3}$& $1.64\ex{3}$ & $1.15\ex{3}$ \\
361 & $10.40\ex{3}$& $10.37\ex{3}$& $3.45\ex{3}$& $3.22\ex{3}$& $1.64\ex{3}$ & $1.12\ex{3}$  \\
\hline\hline \\
\end{tabular}
\caption{Accuracy of the monotone and filtered schemes.}
\label{table:errMA}
\end{table}

\begin{table}[htdp]
\centering\footnotesize
\begin{tabular}{cccccccc}
 & \multicolumn{7}{c}{$C^2$ Example}\\
 & \multicolumn{2}{c}{Iterations} & \multicolumn{2}{c}{CPU Time (s)} & \multicolumn{3}{c}{{Max Error}}\\
N  &  Hybrid &  Filtered&  Hybrid &  Filtered&  Hybrid &  Filtered & Standard\\
\hline
31 & 3 & 2 & 0.1& 0.1& $6.76\ex{5}$&  $4.54\ex{5}$ &  $4.54\ex{5}$\\
63 & 3 & 2 & 0.6& 0.5 & $1.46\ex{5}$&  $1.06\ex{5}$ &  $1.06\ex{5}$\\
127& 3 & 2 & 2.4& 2.0 & $0.35\ex{5}$&  $0.26\ex{5}$ &  $0.26\ex{5}$\\
255& 3 & 2 & 12.5& 9.9 & $0.09\ex{5}$&  $0.06\ex{5}$ &  $0.06\ex{5}$\\
361& 3 & 2 & 28.2& 22.5 & $0.04\ex{5}$&  $0.03\ex{5}$ &  $0.03\ex{5}$\\
\hline\hline\\
 & \multicolumn{7}{c}{$C^1$ Example}\\
 & \multicolumn{2}{c}{Iterations} & \multicolumn{2}{c}{CPU Time (s)} & \multicolumn{3}{c}{{Max Error}}\\
N  &  Hybrid &  Filtered&  Hybrid &  Filtered&  Hybrid &  Filtered & Standard\\
\hline
31  & 2 & 2 & 0.1& 0.1& $6.62\ex{4}$ & $3.99\ex{4}$ &  $3.78\ex{4}$\\
63  & 3 & 2 & 0.6& 0.5& $2.75\ex{4}$ & $1.40\ex{4}$ &  $1.34\ex{4}$\\
127 & 5 & 3 & 3.8& 2.6& $1.68\ex{4}$ & $0.76\ex{4}$ &  $0.59\ex{4}$ \\
255 & 5 & 3 & 19.1& 12.9& $0.85\ex{4}$ & $0.46\ex{4}$ & ---\\
361 & 5 & 6 & 48.3& 50.6& $0.60\ex{4}$ & $0.31\ex{4}$ & ---\\
\hline\hline\\
 & \multicolumn{7}{c}{Example with Blow-up}\\
 & \multicolumn{2}{c}{Iterations} & \multicolumn{2}{c}{CPU Time (s)} & \multicolumn{3}{c}{{Max Error}}\\
N  &  Hybrid &  Filtered&  Hybrid &  Filtered&  Hybrid &  Filtered & Standard\\
\hline
31  & 6 & 7 &0.3 & 0.3& $1.74\ex{3}$ & $1.74\ex{3}$ &  $17.38\ex{3}$\\
63  & 9 & 9 & 1.4& 1.5& $0.59\ex{3}$ & $0.59\ex{3}$ &  $12.62\ex{3}$\\
127 & 11 & 11 & 8.6& 8.4& $0.20\ex{3}$ & $0.20\ex{3}$ &  $9.04\ex{3}$\\
255 & 7 & 8 & 30.5& 32.4& $0.07\ex{3}$ & $0.13\ex{3}$ &  $6.43\ex{3}$\\
361 & 11 & 12 & 101.5& 108.7& $0.04\ex{3}$ & $0.13\ex{3}$ &  $5.42\ex{3}$\\
\hline\hline\\
 & \multicolumn{7}{c}{Lipschitz Example}\\
 & \multicolumn{2}{c}{Iterations} & \multicolumn{2}{c}{CPU Time (s)} & \multicolumn{3}{c}{{Max Error}}\\
N  &  Hybrid &  Filtered&  Hybrid &  Filtered&  Hybrid &  Filtered & Standard\\
\hline
31  & 6 & 7 & 0.2& 0.2& $3.57\ex{3}$& $4.16\ex{3}$ & $5.19\ex{3}$\\
63  & 6 & 9 & 1.0& 1.4& $3.49\ex{3}$& $2.82\ex{3}$ & $2.86\ex{3}$\\
127 & 8 & 8 & 6.6& 6.4& $3.45\ex{3}$& $2.83\ex{3}$ & $1.54\ex{3}$\\
255 & 9 & 10 & 47.1& 38.2& $3.46\ex{3}$& $3.06\ex{3}$ & ---\\
361 & 10 & 9 & 155.8& 81.7& $3.45\ex{3}$& $3.22\ex{3}$ & ---\\
\hline\hline\\
\end{tabular}
\caption{Newton iterations, computation time, and accuracy for the hybrid and filtered schemes on a 17 point stencil.}
\label{table:hybridMA}
\end{table}

\section{Conclusions}
We constructed and implemented a convergent, higher order accurate scheme for the \MA equation.
We extended the convergence theory of Barles and Souganidis by considering the more general class of nearly monotone schemes.  This new convergence proof applies in general to the class of nonlinear elliptic PDEs, and requires only an elliptic scheme as a foundation.

The  combined schemes are called  \emph{filtered} finite difference {approximations}.  The filtered scheme chooses between a convergent elliptic scheme and a more accurate scheme.  The selection principle is based on filtering the difference between the elliptic scheme and the more accurate scheme, reducing to the elliptic scheme when the difference is large, and the accurate scheme when the difference is small.

 The theory ensures, and computational results verify, that solutions of this scheme converge to the viscosity solution of the \MA equation even in the singular setting.  
 
 The accuracy of the filtered schemes was as good as the accuracy of the other methods.  Formal $\bO(h^2)$ accuracy was attained on the smooth example, while on singular examples the accuracy decreased as expected.
Newton's method resulted in a fast solver, with the same number of iterations (2-11, depending on the solution) as for the monotone method and the hybrid method.  This is much faster than other types of solvers, which typically break down on singular solutions, or require more iterations on larger problems.

For the convergence theory, we require $d\theta \to 0$, but in practice, using the filtered scheme means  the accuracy is not affected by $d\theta$, so we can use the narrow stencil scheme and still obtain accuracy corresponding to the regularity of the solution (e.g. $\bO(h^2)$ on smooth solutions, $\bO(h)$ on moderately singular solutions).
The only exception was the most singular example, which was not a viscosity solution, in any case.  

In summary, the new method combines all the advantages of our previous methods: speed of solution, accuracy, and a proof of convergence, while also allowing the use of a narrow stencil scheme in practice.

\bibliographystyle{alpha}
\bibliography{../biblio/FilteredSchemes}

\end{document}